\newtheorem{theorem}{Theorem}[section]
\newtheorem{proposition}[theorem]{Proposition}
\newtheorem{lemma}[theorem]{Lemma}
\newtheorem{remark}[theorem]{Remark}
\newenvironment{proof}{\noindent{\bf Proof:}}{\hfill$\square$}
\DeclareMathOperator{\arcsinh}{arcsinh}
\DeclareMathOperator{\erfc}{erfc}
\numberwithin{equation}{section}
\title{On the computation of the cumulative distribution function of the Normal Inverse Gaussian distribution}
\author{
 \normalsize  Guillermo Navas-Palencia\\
  \texttt{\normalsize  g.navas.palencia@gmail.com}
}
\date{\small \today}
\begin{document}
\maketitle

\abstract{
In this paper, we obtain various series and asymptotic expansions involving the modified Bessel function of the second kind for the normal inverse Gaussian cumulative distribution function. The new expansions accelerate computations, complementing the numerical integration methods implemented in statistical software packages. We also provide a detailed description of the algorithm and its corresponding implementation in C++. The performance and accuracy of the algorithm are extensively tested and benchmarked with open-source implementations, offering superior accuracy and speed-ups of a factor from 5 to 60.
}

\section{Introduction}

The normal inverse Gaussian (NIG) distribution is a flexible four-parameter distribution introduced by Barndorff-Nielsen in 1977 \cite{Barndorff1977}, which arises as a particular case of the generalized hyperbolic (GH) distribution. In the last two decades, it has gained significant popularity in various fields due to its desirable properties and the ability to model a wide range of shapes,  particularly suited for skewed and heavy-tailed data. 

The NIG distribution was initially introduced in the context of financial data modelling, with applications in option pricing to capture non-Gaussian asset returns \cite{Eriksson2009}, portfolio risk management and credit derivatives for pricing collateralized default obligations \cite{Prause1999, Kalemanova2007, Navas-Palencia_thesis2016}, stochastic modelling of energy derivatives \cite{Benth2004} and stochastic volatility modelling \cite{Barndorff1997}. Besides the numerous applications in financial modelling, it has found applications in wireless communication and remote sensing \cite{Hanssen2001, Solbo2004}, turbulence and fluid dynamics \cite{Hedevang2013}, astrophysics \cite{Louarn2024} and reliability analysis \cite{Xu2022}.

In this work, we focus on deriving series and asymptotic expansions to efficiently compute the cumulative distribution function of the normal inverse Gaussian. The new expansions, derived from the Laplace-type integral representation, are proposed to complement the direct use of numerical integration methods, the latter being the method of choice in all the implementations publicly available. 

The outline of the paper is as follows. In Section 2, we revisit the distribution properties of the NIG distribution and introduce alternative integral representations for the cumulative distribution function. Then, in Section 3, we derive and analyse a variety of series expansions and asymptotic expansions for two special cases and the general case, calculating, in many cases, rigorous error bounds. In addition, we provided a detailed treatment of the strategies to compute two possible integral representations using different quadrature methods. Section 4 presents an extensive discussion on algorithmic aspects to ease its implementation to the practitioners. After that, in Section 5, we assess the performance and accuracy of the proposed algorithms. Finally, Section 6 presents our conclusions and several possible enhancements.

\section{Distribution properties}

The NIG distribution is defined as a normal variance-mean mixture distribution, where the mixing distribution for the variance follows an inverse Gaussian (IG) distribution \cite{Paolella2007}. Consider $Z$ is a positive random variable following a distribution, $Z \sim \mathcal{IG}(\delta \gamma, \gamma^2)$ and $\mu$ and $\beta$ are constants. If $X \sim \mathcal{NIG}(\alpha, \beta, \mu, \delta)$, then
\begin{equation}
X \,|\, Z \sim \mathcal{N}(\mu + \beta Z, Z),
\end{equation}
where $\gamma = \sqrt{\alpha^2 - \beta^2}$.
The NIG distribution parameters\footnote{We use the parameterization dominating the literature. We refer to \cite{Paolella2007} and \cite{Prause1999} for an overview of different parameterizations.} admit the following interpretation: $\alpha$ is the tail heaviness, $\beta$ is the asymmetry or skewness, $\mu$ is the location and $\delta$ the scaling. If $\beta = 0$, the distribution is symmetric and centered around $\mu$. The domain of the parameters is
\begin{equation}
0 \le |\beta| < \alpha, \quad \mu \in \mathbb{R}, \quad \delta  > 0.
\end{equation}
In addition, a related parameter typically used to simplify notation is $\omega = \sqrt{(x-\mu)^2 + \delta^2}$.
Since the NIG distribution is a special case of the GH distribution, distributional properties, such as being closed under convolution and infinite divisibility, can also be inferred from results for the GH distribution \cite{Paolella2007}.


\subsection{Density function}
The NIG distribution has probability density function
\begin{equation}
f(x; \alpha, \beta, \mu, \delta) = \frac{\alpha \delta}{\pi} \frac{K_1\left(\alpha\sqrt{\delta^2 + (x-\mu)^2}\right)}{\sqrt{\delta^2 + (x-\mu)^2}} e^{\delta \sqrt{\alpha^2 - \beta^2} + \beta(x-\mu)} = \frac{\alpha \delta}{\pi} \frac{K_1\left(\alpha\omega\right)}{\omega} e^{\delta \gamma + \beta(x-\mu)},
\end{equation}
where $K_1(x)$ is the modified Bessel function of the second kind \cite[\S 10]{NIST:DLMF},  hereafter called modified Bessel function. This function is ubiquitous throughout this work, and for completeness and as a reference, some of its main properties are described in the Appendix \ref{appendix_modified_bessel}. As an additional note, the case $\mu = 0$ and $\delta = 1$ is often defined as the standardized form.

\subsection{Cumulative distribution function}\label{properties_cdf}
The cumulative distribution function has the common integral representation from its density function,
\begin{equation}\label{integral_k1}
F(x; \alpha, \beta, \mu, \delta) = \frac{\alpha \delta e^{\delta \gamma}}{\pi} \int_{-\infty}^{x} \frac{K_1\left(\alpha\sqrt{\delta^2 + (t-\mu)^2}\right)}{\sqrt{\delta^2 + (t-\mu)^2}} e^{\beta(t-\mu)} \mathop{dt}.
\end{equation}

On the other hand, using the variance-mean mixture distribution properties, we have
\begin{equation}\label{integral_phi}
F(x; \alpha, \beta, \mu, \delta) = \frac{\delta}{\sqrt{2\pi}}\int_{0}^{\infty} \Phi\left(\frac{x - (\mu +\beta t)}{\sqrt{t}}\right) t^{-3/2} e^{-\frac{(\delta - \gamma t)^2}{2t}} \mathop{dt},
\end{equation}
where $\Phi(x)$ is standard normal cumulative distribution function. From the latter integral, it follows from the reflection formula of $\Phi(x) = 1 - \Phi(-x)$ that
\begin{equation}\label{cdf_mirror}
\tilde{F}(x; \alpha, \beta, \mu, \delta) = 1- F(-x; \alpha, -\beta, -\mu, \delta).
\end{equation}
The relation becomes indispensable to extend the domain of applicability of multiple expansions derived in the next section. Furthermore, it can also help to minimize numerical issues when computing the complementarity cumulative distribution function or survival function. Finally, authors in \cite{Kalemanova2007} propose applying a logarithmic transformation to transform the integration interval to $[0, 1]$, arguing better numerics.

\subsubsection{Alternative integral representations}
Following, we obtain alternative integral representations in terms of other special functions. Particularly interesting is the derivation of an integral representation in terms of elementary functions. Although these representations have restrictions on the sign of $x-\mu$, the use of the reflection formula \eqref{cdf_mirror} permits the computation on the whole domain.

\begin{proposition} An incomplete Laplace-type integral representation in terms of modified Bessel function $K_0(x)$ is given by
\begin{equation}\label{integral_k0}
F(x; \alpha, \beta, \mu, \delta) = \frac{\sqrt{2}\delta e^{\delta \gamma}}{\pi}\int_{\beta / \sqrt{2}}^{\infty} e^{\sqrt{2}(x-\mu)t} K_0\left(\sqrt{2\left( (x-\mu)^2 + \delta^2\right)} \sqrt{\frac{\gamma^2}{2} + t^2}\right) \mathop{dt}, \quad x - \mu < 0.
\end{equation}
\end{proposition}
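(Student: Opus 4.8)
The plan is to start from the integral representation \eqref{integral_k1} for $F(x;\alpha,\beta,\mu,\delta)$ and replace the ratio $K_1(\alpha\sqrt{\delta^2+(t-\mu)^2})/\sqrt{\delta^2+(t-\mu)^2}$ by a suitable integral so that the $t$-integration can be carried out in closed form, leaving a single integral in the new variable. The natural tool is the classical integral representation of $K_1$ (equivalently, the formula $K_1(z)/z = \tfrac12\int_0^\infty u^{-2}e^{-u-z^2/(4u)}\,du$, or one of the Laplace-type/Gaussian representations for $K_\nu$ collected in the appendix referenced as \ref{appendix_modified_bessel}). Substituting such a representation into \eqref{integral_k1} produces a double integral; after swapping the order of integration (justified by absolute convergence, using $x-\mu<0$ to control the exponential $e^{\beta(t-\mu)}$ against the Gaussian decay at $t\to-\infty$), the inner $t$-integral becomes a Gaussian-type integral over $(-\infty,x]$, which evaluates in terms of elementary exponentials.

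Concretely, I would first shift $t\mapsto t+\mu$ so the integral runs over $(-\infty,x-\mu]$ with integrand involving $e^{\beta t}$ and $\sqrt{\delta^2+t^2}$; write $\omega_0=\sqrt{(x-\mu)^2+\delta^2}$ for the relevant boundary quantity. Next, insert the representation of $K_1$ that introduces an auxiliary variable — I expect the $\tfrac{\gamma^2}{2}+s^2$ structure under the square root in \eqref{integral_k0} to come from a representation of the form $K_0(\sqrt{a}\,\sqrt{b})$ arising after completing the square in the exponent. The key algebraic step is to combine the $\delta^2+t^2$ term from the density with the square coming from the Bessel representation, complete the square in $t$, and recognize the resulting $t$-integral over the half-line $(-\infty,x-\mu]$ as one producing a $K_0$ via the standard formula $\int_{-\infty}^{c}e^{-pt^2+qt}\,dt$ combined back with the outer integral's kernel; the lower limit $\beta/\sqrt2$ and the factor $e^{\sqrt2(x-\mu)t}$ should emerge from the linear term $\beta t$ and the boundary $t=x-\mu$ after the substitution that rescales the auxiliary variable by $\sqrt2$.

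The main obstacle I anticipate is bookkeeping: choosing the right starting representation of $K_1$ so that, after the Fubini swap and the Gaussian $t$-integration, one lands exactly on a $K_0$ rather than on an incomplete Bessel function or an error-function-type expression. In particular the restriction $x-\mu<0$ is essential — it is what guarantees that the half-line $t$-integral $\int_{-\infty}^{x-\mu}$ of the Gaussian factor, together with the monotone exponential, yields the clean $K_0$ with argument $\sqrt{2((x-\mu)^2+\delta^2)}\sqrt{\gamma^2/2+t^2}$ instead of a more complicated incomplete object; the edge case $x-\mu\ge 0$ would instead split into a complete piece plus a remainder, which is precisely why the proposition is stated only for $x-\mu<0$ and why one defers to the reflection formula \eqref{cdf_mirror} otherwise. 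Once the closed form of the inner integral is identified, matching constants ($\sqrt2\,\delta e^{\delta\gamma}/\pi$ versus $\alpha\delta e^{\delta\gamma}/\pi$, and the $\sqrt2$ rescalings) is routine. I would close by remarking that absolute convergence throughout — hence the legitimacy of Fubini — follows from the exponential decay $K_0(z)\sim\sqrt{\pi/(2z)}\,e^{-z}$ at infinity together with $x-\mu<0$, and from the integrable logarithmic singularity of $K_0$ near the origin, which does not occur here since $\gamma^2/2+t^2\ge\gamma^2/2>0$ on the whole range (recall $\gamma=\sqrt{\alpha^2-\beta^2}>0$).
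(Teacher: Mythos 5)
There is a genuine gap in your route. You start from \eqref{integral_k1}, insert a Laplace/Gaussian-type representation of $K_1(z)/z$, swap the order of integration, and expect the inner integral over the half-line $(-\infty,x-\mu]$ to ``evaluate in terms of elementary exponentials'' and, combined with the outer kernel, to produce the $K_0$. That inner integral has the form $\int_{-\infty}^{x-\mu}e^{\beta s-\alpha^{2}s^{2}/(4u)}\,ds$: a Gaussian over a half-line evaluates to an error function (i.e.\ a $\Phi$), not to elementary exponentials, and it cannot directly yield the \emph{complete} Bessel function in \eqref{integral_k0}. Note that the argument of that $K_0$ contains $(x-\mu)^{2}+\delta^{2}$, so the $x$-dependence sits inside the Bessel argument; such a term can only arise from a complete integral over $(0,\infty)$ in the mixing variable, never from a truncated integral ending at $t=x-\mu$. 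Carrying out your substitution therefore just brings you back, after a change of variables, to the mixture representation \eqref{integral_phi}, and the decisive ingredient is still missing. You do flag the risk of ``landing on an error-function-type expression,'' but you never resolve it, and your explanation of the role of $x-\mu<0$ (controlling the half-line Gaussian at $t\to-\infty$) does not correspond to where the formula's structure actually comes from.

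The paper's proof supplies exactly the missing step: it starts from \eqref{integral_phi} and replaces $\Phi\bigl(\tfrac{x-\mu}{\sqrt{t}}-\beta\sqrt{t}\bigr)$ by the incomplete (half-line) Gaussian representation \eqref{integral_erfc_ab},
\begin{equation*}
\Phi\left(\frac{x-\mu}{\sqrt{t}}-\beta\sqrt{t}\right)=\sqrt{\frac{t}{\pi}}\,e^{-\frac{(x-\mu)^2}{2t}}\int_{\beta/\sqrt{2}}^{\infty}e^{-(tu^{2}-\sqrt{2}(x-\mu)u)}\,du,
\end{equation*}
which is precisely what introduces the new integration variable $u$, the lower limit $\beta/\sqrt{2}$ (from $-b/\sqrt{2}$ with $b=-\beta$) and the factor $e^{\sqrt{2}(x-\mu)u}$ --- not, as you conjecture, the linear term $\beta t$ together with the boundary $t=x-\mu$. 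After interchanging the order of integration, the inner integral is the complete Bessel-type integral $\int_{0}^{\infty}t^{-1}e^{-\frac{(x-\mu)^{2}+\delta^{2}}{2t}-(\frac{\gamma^{2}}{2}+u^{2})t}\,dt=2K_{0}\bigl(\sqrt{2((x-\mu)^{2}+\delta^{2})}\,\sqrt{\gamma^{2}/2+u^{2}}\bigr)$, i.e.\ \eqref{bessel_integral} with $\lambda=0$, which gives \eqref{integral_k0} after collecting constants. Without the half-line representation of $\Phi$ (or an equivalent device to generate the truncated $u$-integral), your proposal does not reach the stated formula.
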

\begin{proof}
Consider the integral representation of the function $\Phi\left(\frac{x-\mu}{\sqrt{t}} -\beta \sqrt{t} \right)$ in \cite[\S 7.7]{NIST:DLMF}
\begin{equation*}
\Phi\left(\frac{x-\mu}{\sqrt{t}} -\beta \sqrt{t} \right) = \sqrt{\frac{t}{\pi}} e^{-\frac{(x-\mu)^2}{2t}}\int_{\beta/\sqrt{2}}^{\infty} e^{-(tu^2 - \sqrt{2}(x-\mu) u)} \mathop{du}.
\end{equation*}
Replacing in \eqref{integral_phi} and interchanging the order of integration, we obtain
\begin{equation*}
F(x; \alpha, \beta, \mu, \delta) = \frac{\delta e^{\delta \gamma}}{\sqrt{2}\pi} \int_{\beta/\sqrt{2}}^{\infty} e^{\sqrt{2}(x-\mu) u} \int_{0}^{\infty} t^{-1} e^{-\frac{\left((x-\mu)^2 + \delta^2\right)}{2t} - \left(\frac{\gamma^2}{2} + u^2\right)t} \mathop{dt} \mathop{du}.
\end{equation*}
Observing that the inner integral can be represented in terms of the modified Bessel function $K_0(x)$, we get
\begin{equation*}
\int_{0}^{\infty} t^{-1} e^{-\frac{\left((x-\mu)^2 + \delta^2\right)}{2t} - \left(\frac{\gamma^2}{2} + u^2\right)t} \mathop{dt} = 2 K_0\left(2\sqrt{\frac{(x-\mu)^2 + \delta^2}{2}} \sqrt{\frac{\gamma^2}{2} + t^2}\right).
\end{equation*}
\end{proof}

\begin{proposition}
An integral representation of the cumulative distribution function in terms of elementary functions is given by
\begin{equation}\label{integral_sine_transform}
F(x; \alpha, \beta, \mu, \delta) = 1 - \frac{e^{\delta \gamma}}{\pi}\int_0^{\infty} \frac{t e^{-(x-\mu)\left(\sqrt{t^2 + \alpha^2} - \beta\right)}}{\sqrt{t^2 + \alpha^2}\left(\sqrt{t^2 + \alpha^2} - \beta\right)}\sin(\delta t)\mathop{dt}, \quad x-\mu > 0.
\end{equation}
\end{proposition}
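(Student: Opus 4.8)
The plan is to start from the tail form of \eqref{integral_k1}, i.e.
$1-F(x;\alpha,\beta,\mu,\delta) = \int_x^{\infty} f(t;\alpha,\beta,\mu,\delta)\,dt = \tfrac{\alpha\delta e^{\delta\gamma}}{\pi}\int_x^{\infty}\tfrac{K_1(\alpha\omega_s)}{\omega_s}\,e^{\beta(s-\mu)}\,ds$ with $\omega_s := \sqrt{\delta^2+(s-\mu)^2}$, and to replace the Bessel factor by a parametric integral that turns $K_1$ into an elementary kernel once the order of integration is exchanged. The analytic ingredient is the classical integral $\int_0^{\infty}\tfrac{\cos(bt)}{\sqrt{t^2+a^2}}\,e^{-p\sqrt{t^2+a^2}}\,dt = K_0\!\left(a\sqrt{p^2+b^2}\right)$ for $a,p>0$, which follows in the same spirit as the previous proposition --- from the Macdonald-type formula $\int_0^{\infty}\tau^{\nu-1}e^{-a/\tau-b\tau}\,d\tau = 2(a/b)^{\nu/2}K_\nu(2\sqrt{ab})$ (the case $\nu=0$ of which was already used above) together with an elementary Gaussian integral. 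Differentiating this identity in $b$ --- legitimate for $p>0$, since the differentiated integrand is then dominated uniformly in $b$ by an integrable function of $t$ --- yields $\int_0^{\infty}\tfrac{t\sin(bt)}{\sqrt{t^2+a^2}}\,e^{-p\sqrt{t^2+a^2}}\,dt = \tfrac{ab}{\sqrt{p^2+b^2}}\,K_1\!\left(a\sqrt{p^2+b^2}\right)$, and setting $a=\alpha$, $b=\delta$, $p=s-\mu>0$ gives exactly $\tfrac{\alpha\delta}{\omega_s}K_1(\alpha\omega_s)=\int_0^{\infty}\tfrac{t\sin(\delta t)}{\sqrt{t^2+\alpha^2}}\,e^{-(s-\mu)\sqrt{t^2+\alpha^2}}\,dt$.

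Substituting this into the tail integral produces a double integral over $(s,t)\in(x,\infty)\times(0,\infty)$, and this is precisely where the hypothesis $x-\mu>0$ is needed. Because then $s-\mu>0$ on the whole outer range, and because $\sqrt{t^2+\alpha^2}-\beta\ge\alpha-|\beta|>0$ while $t/\sqrt{t^2+\alpha^2}\le 1$ and $|\sin(\delta t)|\le1$, the integrand is dominated, up to the constant $e^{\delta\gamma}/\pi$, by $e^{-(s-\mu)(\sqrt{t^2+\alpha^2}-\beta)}$; integrating this bound first in $s$ and then in $t$ is finite, since the $s$-integral gives $(\sqrt{t^2+\alpha^2}-\beta)^{-1}e^{-(x-\mu)(\sqrt{t^2+\alpha^2}-\beta)}$, which decays like $t^{-1}e^{-(x-\mu)t}$ as $t\to\infty$ and stays bounded as $t\to0^+$. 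Fubini's theorem then justifies the interchange; for $x-\mu\le0$ the corresponding bound has a divergent $t$-integral, so the interchange genuinely fails --- this is the source of the restriction. Carrying out the now-elementary inner integral $\int_x^{\infty}e^{-(s-\mu)(\sqrt{t^2+\alpha^2}-\beta)}\,ds=(\sqrt{t^2+\alpha^2}-\beta)^{-1}e^{-(x-\mu)(\sqrt{t^2+\alpha^2}-\beta)}$ and collecting constants (the factor $\alpha\delta$ cancels) leaves $1-F(x;\alpha,\beta,\mu,\delta)=\tfrac{e^{\delta\gamma}}{\pi}\int_0^{\infty}\tfrac{t\,e^{-(x-\mu)(\sqrt{t^2+\alpha^2}-\beta)}}{\sqrt{t^2+\alpha^2}\,(\sqrt{t^2+\alpha^2}-\beta)}\sin(\delta t)\,dt$, which is \eqref{integral_sine_transform}.

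I expect the bulk of the work to be the analytic bookkeeping around the parametric Bessel integral rather than any algebra: establishing the cosine representation of $K_0$, justifying differentiation under the integral sign in $b$, and justifying Fubini --- all resting on $p=s-\mu>0$, hence on $x-\mu>0$. A lighter but only superficially different route avoids Fubini: differentiate the right-hand side of \eqref{integral_sine_transform} with respect to $x$, recognise $f(x;\alpha,\beta,\mu,\delta)$ through the same $\tfrac{ab}{\sqrt{p^2+b^2}}K_1$ formula (now with $p=x-\mu$), and then pin down the constant of integration by letting $x-\mu\to+\infty$, where the right-hand integral tends to $0$ by dominated convergence (again using $\sqrt{t^2+\alpha^2}-\beta\ge\alpha-|\beta|>0$) and $F\to1$. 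Either way, all the weight is carried by that one parametric integral and its convergence estimates.
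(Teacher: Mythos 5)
Your proposal is correct and follows essentially the same route as the paper: write $1-F$ as the tail of \eqref{integral_k1}, replace the factor $K_1\left(\alpha\sqrt{\delta^2+(s-\mu)^2}\right)/\sqrt{\delta^2+(s-\mu)^2}$ by its sine-transform representation, interchange the order of integration, and evaluate the elementary inner exponential integral, the condition $x-\mu>0$ ensuring convergence. The only differences are minor: the paper cites the sine transform from Bateman's tables where you derive it from the $K_0$ cosine representation by differentiating in $\delta$, and you make the Fubini domination explicit, which the paper leaves implicit.
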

\begin{proof}
Consider the integral representation \eqref{integral_k1}. Then, we replace the term involving $K_1(x)$ by its sine transform \cite[\S 2.4]{Bateman1954}
\begin{equation*}
\frac{K_1(\alpha \sqrt{t^2 + \delta^2})}{\sqrt{t^2 + \delta^2}} = \frac{1}{\alpha\delta}\int_0^{\infty} \frac{z e^{-t\sqrt{z^2 + \alpha^2}}}{\sqrt{z^2 + \alpha^2}} \sin(\delta z) \mathop{dz},
\end{equation*}
valid for $t \ge 0$. Replacing in \eqref{integral_k1} and interchanging the order of integration, we have
\begin{align*}
F(x; \alpha, \beta, \mu, \delta) &= 1 - \frac{\alpha \delta e^{\delta \gamma}}{\pi}\frac{1}{\alpha\delta}\int_0^{\infty} \frac{z \sin(\delta z)}{\sqrt{z^2 + \alpha^2}}  \int_{x-\mu}^{\infty} e^{-t\sqrt{z^2 + \alpha^2} + \beta t} \mathop{dt} \mathop{dz}\\
&= 1 - \frac{e^{\delta \gamma}}{\pi}\int_0^{\infty} \frac{z \sin(\delta z)}{\sqrt{z^2 + \alpha^2}}  \frac{e^{-(x-\mu)\left(\sqrt{z^2 + \alpha^2} - \beta\right)}}{\sqrt{z^2 + \alpha^2} - \beta} \mathop{dz}
\end{align*}
where the inner integral converges for $x-\mu > 0$.
\end{proof}

Analogously, we obtain an integral representation in terms of the exponential integral $E_1(x)$ using the cosine transform of the modified Bessel function.
\begin{proposition}
An integral representation of the cumulative distribution function in terms of the exponential integral $E_1(x)$ is given by
\begin{equation}
F(x; \alpha, \beta, \mu, \delta) = 1 - \frac{\delta e^{\delta \gamma}}{\pi}\int_0^{\infty} E_1\left((x-\mu)(\sqrt{t^2 + \alpha^2} - \beta)\right) \cos(\delta t) \mathop{dt}, \quad x-\mu > 0.
\end{equation}
\end{proposition}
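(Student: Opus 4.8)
The plan is to retrace the proof of the sine-transform representation \eqref{integral_sine_transform}, replacing the sine transform of the Bessel kernel by its cosine counterpart. First I would recast the starting identity \eqref{integral_k1} in complementary form, using that the density integrates to one over $\mathbb{R}$,
\begin{equation*}
F(x; \alpha, \beta, \mu, \delta) = 1 - \frac{\alpha \delta e^{\delta \gamma}}{\pi} \int_{x}^{\infty} \frac{K_1\left(\alpha\sqrt{\delta^2 + (y-\mu)^2}\right)}{\sqrt{\delta^2 + (y-\mu)^2}}\, e^{\beta(y-\mu)} \mathop{dy},
\end{equation*}
and then perform the shift $s = y - \mu$, so that the range of integration becomes $(x-\mu, \infty)$. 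The hypothesis $x-\mu > 0$ ensures $s > 0$ throughout, which is precisely the condition under which the transform below is valid.

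Next I would invoke the cosine companion of the transform used in \cite{Bateman1954},
\begin{equation*}
\frac{K_1\left(\alpha\sqrt{s^2 + \delta^2}\right)}{\sqrt{s^2 + \delta^2}} = \frac{1}{\alpha s} \int_0^{\infty} e^{-s\sqrt{z^2 + \alpha^2}} \cos(\delta z) \mathop{dz}, \qquad s > 0,
\end{equation*}
which is tabulated and can also be obtained by differentiating with respect to $s$ the identity $K_0(\alpha\sqrt{s^2 + \delta^2}) = \int_0^{\infty} (z^2 + \alpha^2)^{-1/2} e^{-s\sqrt{z^2 + \alpha^2}} \cos(\delta z) \mathop{dz}$ and using $K_0'(w) = -K_1(w)$. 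Substituting this into the complementary integral, the factor $1/(\alpha s)$ cancels the $\alpha$ in the prefactor and provides the $1/s$ needed inside the $s$-integral; after interchanging the order of the $s$- and $z$-integrations I would be left with
\begin{equation*}
F(x; \alpha, \beta, \mu, \delta) = 1 - \frac{\delta e^{\delta \gamma}}{\pi} \int_0^{\infty} \cos(\delta z) \left( \int_{x-\mu}^{\infty} \frac{e^{-s\left(\sqrt{z^2 + \alpha^2} - \beta\right)}}{s} \mathop{ds} \right) \mathop{dz}.
\end{equation*}

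Finally I would identify the inner integral with the exponential integral via $\int_a^{\infty} s^{-1} e^{-cs} \mathop{ds} = E_1(ca)$, applied with $a = x - \mu > 0$ and $c = \sqrt{z^2 + \alpha^2} - \beta \ge \alpha - |\beta| > 0$, and rename $z$ to $t$ to reach the claimed formula. The delicate step is the interchange of the order of integration: because of the $1/s$ factor it is essential that the lower limit of the $s$-integral be the positive number $x - \mu$ — this is the reason for passing through the complementary representation rather than starting from $\int_{-\infty}^{x}$ — and absolute convergence of the resulting double integral is checked most cleanly by bounding $|\cos(\delta z)| \le 1$ and reading the cosine-transform identity backwards, which turns the majorant back into $\int_{x-\mu}^{\infty} \frac{K_1(\alpha\sqrt{s^2+\delta^2})}{\sqrt{s^2+\delta^2}} e^{\beta s} \mathop{ds}$, a tail of the integrable density. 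Everything else is routine and entirely parallel to the preceding proposition.
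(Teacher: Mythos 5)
Your argument is correct and coincides with the paper's own proof: the paper likewise rewrites $F$ in complementary form over $(x-\mu,\infty)$, substitutes the cosine transform $\frac{K_1(\alpha\sqrt{s^2+\delta^2})}{\sqrt{s^2+\delta^2}} = \frac{1}{\alpha s}\int_0^{\infty} e^{-s\sqrt{z^2+\alpha^2}}\cos(\delta z)\,dz$, interchanges the integrations, and evaluates $\int_{x-\mu}^{\infty} s^{-1}e^{-(\sqrt{z^2+\alpha^2}-\beta)s}\,ds = E_1\left((x-\mu)(\sqrt{z^2+\alpha^2}-\beta)\right)$. Your added justifications (deriving the cosine transform from the $K_0$ identity and the absolute-convergence check for the interchange) only supplement the same route, which the paper also notes could alternatively be reached from the sine-transform representation by integration by parts.
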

\begin{proof}
This result can be derived from the sine transform in \eqref{integral_sine_transform} using integration by parts or using the cosine transform \cite[\S 1.4]{Bateman1954}
\begin{equation*}
\frac{K_1\left(\alpha\sqrt{\delta^2 + t^2}\right)}{\sqrt{\delta^2 + t^2}} = \frac{1}{\alpha t}\int_0^{\infty} e^{-t\sqrt{z^2 + \alpha^2}} \cos(\delta z) \mathop{dz}.
\end{equation*}
Thus,
\begin{equation*}
F(x; \alpha, \beta, \mu, \delta) = 1 - \frac{\alpha \delta e^{\delta \gamma}}{\pi}\int_{x-\mu}^{\infty} \frac{e^{\beta t}}{\alpha t}\int_0^{\infty} e^{-t\sqrt{z^2 + \alpha^2}} \cos(\delta z) \mathop{dz} \mathop{dt},
\end{equation*}
where the inner integral is expressible in closed form in terms of the exponential integral $E_1(x)$
\begin{equation*}
\int_{x-\mu}^{\infty}\frac{1}{t}e^{-(\sqrt{z^2 + \alpha^2} - \beta)t} \mathop{dt} = E_1\left((x-\mu)(\sqrt{z^2 + \alpha^2} - \beta)\right).
\end{equation*}
\end{proof}

Moreover, an integral representation whose integrand decays double-exponentially is obtained using the transformation described in \cite[\S 42.4]{Temme2015}. First, we write $\beta = \alpha \tanh(\theta)$, valid since $|\beta| < \alpha$. Substituting in \eqref{integral_k1} the term $x-\mu = \delta \sinh(\theta + u)$, we obtain
\begin{equation}
F(x; \alpha, \beta, \mu, \delta) = \frac{\alpha \delta e^{\delta \gamma}}{\pi} \int_{-\infty}^{\tau} K_1 \left(\alpha \delta \cosh(\theta + u)\right) e^{\beta \delta \sinh(\theta + u)} \mathop{du},
\end{equation}
where
\begin{equation}
\tau = \arcsinh\left(\frac{x - \mu}{\delta}\right) - \theta.
\end{equation}

To conclude, a discussion on the numerical integration methods for effective computation of some the previous integrals is presented in Section \ref{section_numerical_integration}.

%
%

\section{Methods of computation}\label{section_methods_of_computation}
In this Section, we describe the methods used for efficient computation of the cumulative distribution function for the general case and various special cases, $\beta = 0$ and $x=\mu$, respectively. In particular, we explore series expansions, asymptotic expansions, uniform asymptotic expansions and numerical integration. We emphasize that many of the techniques and numerical methods introduced in the sections devoted to the two special cases shall also be used to develop expansions of the general case in Section \ref{section_general_case}.

\subsection{Special case $\beta = 0$}\label{section_special_case_beta_0}
As mentioned earlier, the case $\beta = 0$ corresponds to the symmetric case. The symmetric NIG distribution, but also the cases where $\beta \to 0$, has been widely used in financial modelling for modelling macro-economical factors as a fat-tailed alternative to the normal distribution. In this Section, we develop convergent series expansions, uniform asymptotic expansion and asymptotic expansions for this special case.

\subsubsection{Expansions $|x-\mu| \to 0$}
For developing a series expansion for the case $|x-\mu| \to 0$, we start from the integral representation in \eqref{integral_phi}, after expanding the square
\begin{equation}\label{integral_phi2}
F(x; \alpha, 0, \mu, \delta) = \frac{\delta e^{\delta \alpha}}{\sqrt{2\pi}} \int_0^{\infty} \Phi\left(\frac{x - \mu}{\sqrt{t}}\right) t^{-3/2} e^{-\frac{\delta^2}{2t} - \frac{\alpha^2}{2}t} \mathop{dt}.
\end{equation}
We proceed expanding the term $\Phi\left(\frac{x-\mu}{\sqrt{t}}\right)$, by using the two well-known absolutely convergent series expansions of $\Phi(x)$ \cite[\S 2]{Lebedev1972}
\begin{equation}\label{phi_expansion_1}
\Phi(x) = \frac{1}{2} + \frac{1}{\sqrt{2\pi}}\sum_{k=0}^{\infty} \frac{(-1)^k x^{2k + 1}}{2^k k! (2k+1)},
\end{equation}
and
\begin{equation}\label{phi_expansion_2}
\Phi(x) = \frac{1}{2} + \frac{e^{-x^2 / 2}}{\sqrt{2\pi}}\sum_{k=0}^{\infty} \frac{x^{2k+1}}{(2k + 1)!!}.
\end{equation}
If we choose the expansion \eqref{phi_expansion_1} and interchange the order of integration and summation, the resulting integral has the form
\begin{equation}\label{phi_expansion_integral_xmu_b0_alternating}
F(x; \alpha, 0, \mu, \delta) = \frac{\delta e^{\delta \alpha}}{\sqrt{2\pi}} \sum_{k=0}^{\infty} \frac{(-1)^k (x-\mu)^{2k + 1}}{2^k k! (2k + 1)}\int_0^{\infty} t^{-k-2} e^{-\frac{\delta^2}{2t} - \frac{\alpha^2}{2}t} \mathop{dt},
\end{equation}
where the integral has a closed-form in terms of the modified Bessel function. In general,
\begin{equation}\label{bessel_integral}
\int_0^{\infty} t^{\lambda - 1}e^{-a/t - zt} \mathop{dt} = 2\left(\frac{\alpha}{z}\right)^{\lambda/2} K_{\lambda}(2\sqrt{\alpha z}).
\end{equation}
Inserting \eqref{bessel_integral} in \eqref{phi_expansion_integral_xmu_b0_alternating} and rearranging terms, we obtain the the alternating series
\begin{equation}\label{expansion_xmu_b0_alternating}
F(x; \alpha, 0, \mu, \delta) = \frac{1}{2} + \frac{\delta e^{\delta \alpha}}{\pi}\sum_{k=0}^{\infty} \frac{(-1)^k (x-\mu)^{2k+1}}{2^k k! (2k + 1)} \left(\frac{\alpha}{\delta}\right)^{k+1}K_{k+1}(\alpha \delta).
\end{equation}
Moreover, to obtain a similar series with positive terms, we choose the expansion of $\Phi(x)$ in \eqref{phi_expansion_2}, yielding
\begin{equation}\label{expansion_xmu_b0_positive}
F(x; \alpha, 0, \mu, \delta) = \frac{1}{2} + \frac{\delta e^{\delta \alpha}} {\pi}\sum_{k=0}^{\infty} \frac{(x-\mu)^{2k+1}}{(2k +1)!!} \left(\frac{\alpha}{\omega}\right)^{k+1}K_{k+1}(\alpha \omega).
\end{equation}

Note that the series expansions \eqref{expansion_xmu_b0_alternating} and \eqref{expansion_xmu_b0_positive} can be written as a truncated series with the corresponding remainder term. For example, truncating the series \eqref{expansion_xmu_b0_positive} at $N$, we can write
\begin{equation}\label{expansion_xmu_b0_positive_error_term}
\sum_{k=0}^{\infty} T_k = \sum_{k=0}^{N-1} T_k + \sum_{k=N}^{\infty}T_k, \quad T_k = \left(\frac{(x-\mu)^2 \alpha}{\omega}\right)^k \frac{K_{k+1}(\alpha \omega)}{(2k +1)!!}.
\end{equation}
Thus, one has the series with remainder term $R_N = \sum_{k=N}^{\infty}T_k$
\begin{equation}\label{expansion_xmu_b0_positive_with_remainder}
F(x; \alpha, 0, \mu, \delta) = \frac{1}{2} + \frac{\delta e^{\delta \alpha}}{\pi}\frac{(x-\mu)\alpha}{\omega}\left(\sum_{k=0}^{N-1}T_k + R_N\right).
\end{equation}

For a rigorous evaluation of error bounds given a number of terms $N$, it is convenient to calculate an upper bound for $R_N$ in \eqref{expansion_xmu_b0_positive_error_term}. We can, for example, bound $R_N$ by comparison with a geometric series
\begin{equation}
\left|R_N \right| \le \frac{|T_N|}{1 - C}, \quad C = \left|\frac{T_{N+1}}{T_N}\right|
\end{equation}
iff $C < 1$, where $T_N$ is the first omitted term in the expansion. The following lemma provides an upper bound for $K_{\nu+1}(x)$, required for a posterior derivation of an upper bound for the term $T_N$.
\begin{lemma}\label{lemma_1}
For $x \ge 0$ and $\nu \ge -\frac{1}{2}$ we have
\begin{equation}
K_{\nu + 1}(x) < \frac{\Gamma(\nu + 1)2^{\nu}}{x^{\nu + 1}}.
\end{equation}
\end{lemma}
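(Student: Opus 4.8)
The plan is to derive the bound directly from the Laplace-type Bessel integral \eqref{bessel_integral}, which is already available in the excerpt. I would specialize it with $\lambda = \nu+1$, $z = 1$ and $a = x^2/4$; then $2\sqrt{az} = x$, $(a/z)^{\lambda/2} = (x/2)^{\nu+1}$, and solving for the Bessel function gives, for $x > 0$,
\begin{equation*}
K_{\nu+1}(x) = \frac{1}{2}\left(\frac{2}{x}\right)^{\nu+1} \int_0^{\infty} t^{\nu} e^{-t - \frac{x^2}{4t}} \mathop{dt}.
\end{equation*}
I would briefly note that this integral is well defined even when $\nu$ is negative: the factor $e^{-x^2/(4t)}$ kills the integrand as $t \to 0^{+}$, while $e^{-t}$ does so as $t \to \infty$.

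The second step is the elementary majorization $e^{-x^2/(4t)} < 1$, which is strict for every $t \in (0,\infty)$ as soon as $x > 0$. Dropping this factor leaves $\int_0^{\infty} t^{\nu} e^{-t}\mathop{dt}$, which converges precisely when $\nu > -1$ — in particular for all $\nu \ge -\tfrac12$ — and equals $\Gamma(\nu+1)$. Hence
\begin{equation*}
K_{\nu+1}(x) < \frac{1}{2}\left(\frac{2}{x}\right)^{\nu+1}\Gamma(\nu+1) = \frac{\Gamma(\nu+1)\, 2^{\nu}}{x^{\nu+1}},
\end{equation*}
which is exactly the asserted inequality, the strictness being inherited from the strict bound on the exponential.

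The argument is short and essentially mechanical, so there is no serious obstacle; the only points requiring care are (i) checking that the chosen specialization of \eqref{bessel_integral} is legitimate for the (possibly negative) exponent $\nu$ in play, which is guaranteed by the $e^{-x^2/(4t)}$ factor, and (ii) the convergence threshold $\nu > -1$ for the reduced Gamma integral, which is what makes the hypothesis $\nu \ge -\tfrac12$ more than sufficient. The endpoint $x = 0$ should be read out of the statement (there both sides are $+\infty$); alternatively one may simply restrict to $x > 0$, which is all that is needed in the subsequent applications of the lemma.
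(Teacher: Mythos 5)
Your proof is correct, but it takes a different route from the paper. The paper's proof is a two-line combination of external inequalities: the uniform bound $x K_{\nu+1}(x) I_{\nu}(x) \le 1$ of Gaunt (2016) together with Luke's lower bound $\left(\frac{x}{2}\right)^{\nu}\frac{1}{\Gamma(\nu+1)} < I_{\nu}(x)$, which immediately give $K_{\nu+1}(x) \le \frac{1}{x I_{\nu}(x)} < \frac{\Gamma(\nu+1)2^{\nu}}{x^{\nu+1}}$; the hypothesis $\nu \ge -\tfrac12$ is exactly what the Gaunt bound needs. You instead specialize the Laplace-type integral \eqref{bessel_integral} with $\lambda=\nu+1$, $a=x^2/4$, $z=1$ to get $K_{\nu+1}(x) = \tfrac12\left(\tfrac{2}{x}\right)^{\nu+1}\int_0^{\infty} t^{\nu} e^{-t - x^2/(4t)}\,dt$ and then drop the factor $e^{-x^2/(4t)} < 1$, reducing the integral to $\Gamma(\nu+1)$. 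This is self-contained (it uses only a formula already quoted in the paper, no external monotonicity results), it yields strict inequality for every $x>0$, and it in fact covers the slightly larger range $\nu > -1$, since all you need is convergence of the Gamma integral; the paper's route, by contrast, leans on sharper machinery (product bounds for $K_{\nu+1}I_{\nu}$) that could be reused for finer estimates but buys nothing extra here. Your remarks on the two points of care are apt: the specialization of \eqref{bessel_integral} is legitimate for $x>0$ and any real order, and the endpoint $x=0$ is vacuous for both proofs since both sides diverge there.
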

\begin{proof}
The proof reduces to combining the uniform bound in \cite{Gaunt2016}
\begin{equation}
x K_{\nu + 1}(x) I_{\nu}(x) \le 1.
\end{equation}
with the lower bound \cite{Luke1972}
\begin{equation}
\left(\frac{x}{2}\right)^{\nu}\frac{1}{\Gamma(\nu + 1)} < I_{\nu}(x).
\end{equation}
Then, it follows that
\begin{equation}
K_{\nu + 1}(x) \le \frac{1}{x I_{\nu}(x)} < \frac{\Gamma(\nu + 1)2^{\nu}}{x^{\nu + 1}}.
\end{equation}
\end{proof}

\begin{theorem}\label{theorem_expansion_xmu_b0_postivie_remainder}
Given $\alpha > 0$, $\omega > 0$, $x-\mu \in \mathbb{R}$ and $N \in \mathbb{N}$, the remainder term in \eqref{expansion_xmu_b0_positive_with_remainder} satisfies
\begin{equation}\label{bound_remainder_xmu_b0_positive}
R_N \le \frac{|T_N|}{1 - C},
\end{equation}
where
\begin{equation}\label{bound_TN_xmu_b0_positive}
T_N < \frac{1}{2\alpha\omega}\left(\frac{x-\mu}{\omega}\right)^{2N} \sqrt{\frac{\pi}{N + 1/2}}.
\end{equation}
and
\begin{equation}
C < \left(\frac{x-\mu}{\omega}\right)^2 \frac{N + 3/2 + \sqrt{(N + 3/2)^2 + (\alpha\omega)^2}}{2N + 3}.
\end{equation}
\end{theorem}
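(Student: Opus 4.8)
The statement bundles three claims: the remainder estimate \eqref{bound_remainder_xmu_b0_positive}, the closed form \eqref{bound_TN_xmu_b0_positive} for the first omitted term $T_N$, and the closed-form bound for $C=|T_{N+1}/T_N|$. I would prove the two closed-form bounds first, since each is a self-contained estimate, starting with $T_N$. By \eqref{expansion_xmu_b0_positive_error_term}, $T_N=\bigl((x-\mu)^{2}\alpha/\omega\bigr)^{N}K_{N+1}(\alpha\omega)/(2N+1)!!$ and every factor is positive, so $|T_N|=T_N$. Applying Lemma \ref{lemma_1} with $\nu=N$ replaces $K_{N+1}(\alpha\omega)$ by $\Gamma(N+1)2^{N}/(\alpha\omega)^{N+1}$; cancelling the powers of $\alpha$ and $\omega$ leaves $T_N<\tfrac{1}{\alpha\omega}\bigl(\tfrac{x-\mu}{\omega}\bigr)^{2N}\,N!\,2^{N}/(2N+1)!!$. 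The identity $(2N+1)!!=(2N+1)!/(2^{N}N!)$ turns the combinatorial factor into $4^{N}(N!)^{2}/(2N+1)!$, which by the Legendre duplication formula equals $\tfrac{\sqrt{\pi}}{2}\,\Gamma(N+1)/\Gamma(N+3/2)$; and Gautschi's inequality $\Gamma(x+1)/\Gamma(x+s)\ge x^{1-s}$ with $x=N+\tfrac12$, $s=\tfrac12$ gives $\Gamma(N+1)/\Gamma(N+3/2)\le (N+\tfrac12)^{-1/2}$. Combining these reproduces \eqref{bound_TN_xmu_b0_positive} exactly.

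For $C$: since the $T_k$ are positive, $C=T_{N+1}/T_N=\dfrac{(x-\mu)^{2}\alpha}{\omega\,(2N+3)}\cdot\dfrac{K_{N+2}(\alpha\omega)}{K_{N+1}(\alpha\omega)}$, using $(2N+3)!!=(2N+3)(2N+1)!!$. I would then invoke the classical sharp upper bound for contiguous modified Bessel function ratios, $K_{\nu+1}(z)/K_{\nu}(z)<\bigl(\nu+\tfrac12+\sqrt{(\nu+\tfrac12)^{2}+z^{2}}\bigr)/z$ (valid for $\nu\ge0$; it follows from the three-term recurrence/continued-fraction representation of the ratio), applied with $\nu=N+1$ and $z=\alpha\omega$. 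Substituting and simplifying yields precisely the stated bound $C<\bigl(\tfrac{x-\mu}{\omega}\bigr)^{2}\,\tfrac{N+3/2+\sqrt{(N+3/2)^{2}+(\alpha\omega)^{2}}}{2N+3}$.

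Finally, $R_N\le|T_N|/(1-C)$ is the geometric-series comparison set up before Lemma \ref{lemma_1}: if $T_{k+1}/T_k\le C<1$ for every $k\ge N$ then $R_N=\sum_{k\ge N}T_k\le T_N\sum_{j\ge0}C^{\,j}=T_N/(1-C)$. The delicate point — the one I expect to be the real obstacle — is the hypothesis $T_{k+1}/T_k\le C$ \emph{for all} $k\ge N$, because $T_{k+1}/T_k$ is not monotone in $k$ (it decreases and then increases back up to its limit $((x-\mu)/\omega)^{2}<1$), so controlling it at $k=N$ alone does not suffice. The remedy is to read the $C$-bound with $N$ replaced by the running index, i.e. to set
\[
\bar C(k):=\Bigl(\tfrac{x-\mu}{\omega}\Bigr)^{2}\frac{k+3/2+\sqrt{(k+3/2)^{2}+(\alpha\omega)^{2}}}{2k+3}=\Bigl(\tfrac{x-\mu}{\omega}\Bigr)^{2}\Bigl(\tfrac12+\tfrac12\sqrt{1+\tfrac{(\alpha\omega)^{2}}{(k+3/2)^{2}}}\Bigr),
\]
which is decreasing in $k$; then $T_{k+1}/T_k<\bar C(k)\le\bar C(N)$ for all $k\ge N$, so $\bar C(N)$ is a legitimate uniform majorant and $R_N\le T_N/(1-\bar C(N))$ whenever $\bar C(N)<1$. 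Hence \eqref{bound_remainder_xmu_b0_positive} is cleanest read with $C=\bar C(N)$; with the single ratio $|T_{N+1}/T_N|$ one additionally needs a term-ratio monotonicity that fails in general. Everything else is routine algebra together with the two cited inequalities, the contiguous-$K$ ratio bound being the one external ingredient that does real work.
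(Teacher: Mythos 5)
Your proposal follows essentially the same route as the paper: the bound \eqref{bound_TN_xmu_b0_positive} is obtained from Lemma \ref{lemma_1} plus the identity $(2N+1)!!=(2N+1)!/(2^N N!)$, the duplication formula, and a Wendel/Gautschi-type gamma-ratio inequality (the paper cites Wendel's $\Gamma(x+1)/\Gamma(x+s)\le(x+s)^{1-s}$, you use the equivalent Gautschi form — same estimate), and the bound on $C$ is obtained exactly as in the paper by writing $C=\frac{(x-\mu)^2\alpha}{\omega(2N+3)}\,\frac{K_{N+2}(\alpha\omega)}{K_{N+1}(\alpha\omega)}$ and invoking Segura's sharp bound for the ratio of modified Bessel functions. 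So for the two closed-form estimates your argument and the paper's proof coincide.

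Where you go beyond the paper is the geometric-series step. The paper simply sets up $|R_N|\le|T_N|/(1-C)$ with $C=|T_{N+1}/T_N|$ before the theorem and, in the proof, only bounds $T_N$ and $C$; it never verifies that the first ratio majorizes all subsequent ratios $T_{k+1}/T_k$ for $k\ge N$, which is what the comparison actually requires, and which can fail here (e.g.\ for small $\alpha\omega$ one has $T_{k+1}/T_k\approx\bigl(\tfrac{x-\mu}{\omega}\bigr)^2\tfrac{2k+2}{2k+3}$, increasing in $k$, so later ratios exceed the first one). Your fix — observing that the closed-form majorant
\begin{equation*}
\bar C(k)=\Bigl(\tfrac{x-\mu}{\omega}\Bigr)^{2}\Bigl(\tfrac12+\tfrac12\sqrt{1+\tfrac{(\alpha\omega)^{2}}{(k+3/2)^{2}}}\Bigr)
\end{equation*}
is decreasing in $k$, so that Segura's bound gives $T_{k+1}/T_k<\bar C(k)\le\bar C(N)$ uniformly for $k\ge N$, hence $R_N\le T_N/(1-\bar C(N))$ whenever $\bar C(N)<1$ — is exactly what is needed to make \eqref{bound_remainder_xmu_b0_positive} rigorous with $C$ read as the stated closed-form bound rather than as the literal ratio. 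This is a genuine (if small) repair of a point the paper leaves implicit, and it costs nothing since $\bar C(N)$ is the quantity the theorem exhibits anyway; the rest of your write-up is equivalent to the published proof.
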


\begin{proof}
The use of Lemma \ref{lemma_1} gives
\begin{align*}
T_N &= \left(\frac{(x-\mu)^2 \alpha}{\omega}\right)^N \frac{K_{N+1}(\alpha \omega)}{(2N +1)!!}\\
&<  \left(\frac{(x-\mu)^2 \alpha}{\omega}\right)^N \frac{\Gamma(N + 1)2^N}{(\alpha\omega)^{N + 1}(2N +1)!!}\\
&= \frac{1}{\alpha\omega}\left(\frac{x-\mu}{\omega}\right)^{2N} \frac{(N! 2^N)^2}{(2N + 1)!}.
\end{align*}
An upper bound for the ratio of factorials in the previous inequality is given by
\begin{equation*}
\frac{(N! 2^N)^2}{(2N + 1)!} = \frac{\sqrt{\pi}}{2}\frac{\Gamma(N+1)}{\Gamma(N + 3/2)} \le \frac{1}{2}\sqrt{\frac{\pi}{N + 1/2}},
\end{equation*}
where we use the fact that $\Gamma(N + 3/2) = (N + 1/2) \Gamma(N + 1/2)$ and the upper bound of the ratio of gamma functions \cite{Wendel1948}
\begin{equation}
\frac{\Gamma(x + 1)}{\Gamma(x+s)} \le (x + s)^{1-s}, \quad s \in (0, 1).
\end{equation}
Thus, the following bound for $T_N$ holds
\begin{equation}
T_N < \frac{1}{2\alpha\omega}\left(\frac{x-\mu}{\omega}\right)^{2N} \sqrt{\frac{\pi}{N + 1/2}}.
\end{equation}

For the ratio $C$, an explicit formula in terms of the ratio of modified Bessel functions is
\begin{equation}
C = \frac{T_{N+1}}{T_N} = \frac{(x-\mu)^2 \alpha}{\omega (2N + 3)} \frac{K_{N+2}(\alpha\omega)}{K_{N+1}(\alpha\omega)}.
\end{equation}
The ratio can be bounded using a sharp bound for the ratio of modified Bessel functions \cite{Segura2023}, yielding
\begin{equation}
\frac{K_{N+2}(\alpha\omega)}{K_{N+1}(\alpha\omega)} < \frac{N + 3/2 + \sqrt{(N + 3/2)^2 + (\alpha\omega)^2}}{\alpha \omega}.
\end{equation}
Then, we have
\begin{equation*}
C < \left(\frac{x-\mu}{\omega}\right)^2 \frac{N + 3/2 + \sqrt{(N + 3/2)^2 + (\alpha\omega)^2}}{2N + 3}.
\end{equation*}
\end{proof}

To study the regime of applicability for the expansion, we can estimate the required number of terms $N$ equating the bound of $T_N$ in \eqref{bound_TN_xmu_b0_positive} times the normalizing factor with the requested absolute error $\epsilon$ and solving for $N$, which gives
\begin{equation}\label{N_expansion_xmu_b0_positive}
N \approx -\frac{\Re(W_{-1}(D))}{\log(A^4)} - \frac{1}{2}, \quad A = \frac{x-\mu}{\omega}, \quad B = \frac{A\delta e^{\delta \alpha}}{2\omega\pi}, \quad C = \frac{\epsilon^2}{B^2 \pi}, \quad D = -\frac{\log(A^4)}{C A^2}
\end{equation}
where $W_k(x)$ denotes the Lambert $W$ function \cite[\S 4.13]{NIST:DLMF}. The branch $k=-1$ is used to obtain the maximum real $N$. Note that since $A^2 < 1$ by definition, $D > 0$. When $C A^2$ is tiny, $W_{-1}(D)$ can be approximated as $\Re(W_{-1}(D)) \sim \log(D) - \log(\log(D))$ using the first two terms of the asymptotic expansion in \cite[\S 4.13.10]{NIST:DLMF}.

The previous analysis performed on the expansion \eqref{expansion_xmu_b0_positive} is repeated for the alternating expansion \eqref{expansion_xmu_b0_alternating}. The main results are summarized below for the purpose of brevity. The expansion \eqref{expansion_xmu_b0_alternating} rewritten including the remainder term follows
\begin{equation}
F(x; \alpha, 0, \mu, \delta) = \frac{1}{2} + \frac{(x-\mu)\alpha e^{\delta \alpha}}{\pi} \left( \sum_{k=0}^{N-1} T_k + R_N\right),  \quad T_k = \left(-\frac{(x-\mu)^2 \alpha}{\delta}\right)^k \frac{K_{k+1}(\alpha \delta)}{2^k k! (2k + 1)}.
\end{equation}
The last omitted term $T_N$ satisfies\footnote{The corresponding upper bound for $C$ can be computed straightforwardly following the procedure described in Theorem \ref{theorem_expansion_xmu_b0_postivie_remainder}.}
\begin{equation}
|T_N| < \frac{1}{\alpha \delta} \left(\frac{x-\mu}{\delta}\right)^{2N} \frac{1}{2N + 1},
\end{equation} 
and the number of terms $N$ can be determined employing the Lambert $W$ function for a given error $\epsilon$
\begin{equation}
N \approx -\frac{\Re(W_{-1}(D))}{\log(A^2)} - \frac{1}{2}, \quad A = \frac{x-\mu}{\delta}, \quad B = \frac{A e^{\delta \alpha}}{\pi}, \quad C = \frac{\epsilon}{B}, \quad D = -\frac{\log(A)}{CA}.
\end{equation}

Now we are in position to compare both series and their respective domains of applicability. A first important observation is that the alternating series \eqref{expansion_xmu_b0_alternating} does not converge when $|x - \mu| > \delta$, and the number of terms $N$ increases rapidly when $|x - \mu| \sim \delta$. In contrast, the series \eqref{expansion_xmu_b0_positive} is absolutely convergent. The convergence of the latter can be reliably assessed applying to $T_k$ in \eqref{expansion_xmu_b0_positive_error_term} the asymptotic estimates of $K_{k+1}(\alpha\omega)$ for $\alpha \omega \to 0$ and $\alpha \omega \to \infty$, \eqref{besselk_x_to_0} and \eqref{besselk_x_to_inf}, respectively:
\begin{align*}
T_k &\sim \frac{\sqrt{\pi}}{2\alpha\omega} \left(\frac{x-\mu}{\omega}\right)^{2k}\frac{\Gamma(k + 1)}{\Gamma(k + 3/2)}, & \alpha\omega \to 0,\\
T_k &\sim \sqrt{\frac{\pi}{2\alpha \omega}} \left(\frac{(x-\mu)^2\alpha}{\omega}\right)^k \frac{e^{-\alpha \omega}}{(2k + 1)!!}, &\alpha\omega \to \infty.
\end{align*}
The asymptotic estimates of $T_k$ show that the series expansion \eqref{expansion_xmu_b0_positive} is slowly convergent when
\begin{equation*}
\left(\frac{x-\mu}{\omega}\right)^2 \to 1 \Longleftrightarrow \delta \to 0,
\end{equation*}
and the ratio of convergence improves when $\delta \to \infty$, then it can be viewed as an asymptotic expansion for large $\delta$. For $k \to \infty$, $T_k$ follows the asymptotic behaviour
\begin{equation*}
T_k \sim \frac{1}{\alpha\omega}\left(\frac{x-\mu}{\omega}\right)^{2k}\sqrt{\frac{\pi}{e}}\frac{1}{\sqrt{4k + 2}}\left(\frac{2k + 2}{2k + 1}\right)^{k + \frac{1}{2}}, \quad k \to \infty,
\end{equation*}
where we use the asymptotic estimate of modified Bessel function for large order \eqref{besselk_order_to_inf} and apply Stirling's approximation for the double factorial. It remains to analyze the quality of the bound in \eqref{bound_remainder_xmu_b0_positive} for different parameters. Table \ref{table_bound_remainder_xmu_b0_positive} shows the effectiveness of the bound \eqref{bound_remainder_xmu_b0_positive} after estimating $N$ to achieve machine-precision using \eqref{N_expansion_xmu_b0_positive}. For small values of $\alpha\omega$ the bound is sharp, whereas it is conservative for larger values, precisely where the rate of convergence improves. As a remark, the estimation of $N$ for large $\alpha\omega$ can be enhanced by selecting $N$ using the asymptotic estimate for $\alpha\omega \to \infty$ via binary search. Although, the bound might overestimate $N$ for some parameters, the main purpose of the estimation of $N$ using \eqref{N_expansion_xmu_b0_positive} is to decide whether the series expansion should be selected as the method of computation given a certain parameter region (see Section \ref{subsection_implementation}). Table \ref{table_bound_remainder_xmu_b0_positive} also shows that for small $\delta$ and $\alpha\omega$, the required number of terms makes the series expansion impractical. In the next section, we present a convergence acceleration method to obtain a rapidly convergent series for these cases.

\begin{table}[H]
	\centering
	\scalebox{0.9}{
\begin{tabular}{ccccc|ccc}
\hline
$x$ & $\alpha$ & $\mu$ & $\delta$ & $\alpha\omega$ & $N$ \eqref{N_expansion_xmu_b0_positive} &  $R_N$ & Bound \eqref{bound_remainder_xmu_b0_positive}\\
\hline
	1 & 5 & 1/4 & 1 & 6.25 & 42 & $8.8\cdot 10^{-19}$ & $1.1\cdot 10^{-18}$\\
	1/2 & 1/3 & 1/4 & 1/10 & 0.09 & 236 & $2.9\cdot 10^{-17}$ & $2.9\cdot 10^{-17}$\\
    1/3 & 10 & 1/5 & 1/50 & 1.35 & 1,494 & $2.1\cdot 10^{-16}$ & $2.2\cdot 10^{-16}$\\
    1 & 10 & 1/5 & 5 & 50.64 &  25 & $1.9\cdot 10^{-29}$ & $4.0\cdot 10^{-21}$\\
    3 & 10 & 1/5 & 10 & 103.85 & 53 & $2.4\cdot 10^{-36}$ & $1.4\cdot 10^{-19}$\\
    10 & 1/10 & 1/5 & 10 & 1 & 53 & $3.8\cdot 10^{-18}$ & $3.9\cdot 10^{-18}$\\
	\hline
	\end{tabular}}
	\caption{The remainder of the series expansion  \eqref{expansion_xmu_b0_positive} and bound \eqref{bound_remainder_xmu_b0_positive}, estimating $N$ using \eqref{N_expansion_xmu_b0_positive} to achieve machine precision.}
	\label{table_bound_remainder_xmu_b0_positive}
\end{table}

\subsubsection{Convergence acceleration of the expansion $|x-\mu| \to 0$}
Table \ref{table_bound_remainder_xmu_b0_positive} showed that for small values of $\delta$ and $\alpha \omega$ the required number of terms grows considerably, thereby, resorting to numerical integration (see Section \ref{section_numerical_integration}) might be a more efficient approach. Alternatively, a common technique to reduce the number of terms of slowly convergent series is to use series acceleration methods (Shank's transformation and Levin-type transformations among others \cite[\S 9]{Gil2007}). 
Attempts to use Shank's transformation were unsuccessful; we did not observe a significant reduction in the number of terms $N$ while achieving only around ten correct digits systematically for all cases. In addition, a major drawback is that Shank's acceleration requires higher-precision arithmetic to compensate for cancellation effects, which makes us discard this option for an implementation in double-precision arithmetic. 

In the following, we investigate the use of exponentially improved asymptotic expansions. When the information about the remainder is available, this technique consists of re-expanding the remainder, obtaining an expansion exhibiting faster convergence. For further details, we refer to \cite[\S 14]{Olver1997}. Consider the convergent series expansion in \eqref{expansion_xmu_b0_positive_error_term}
\begin{equation}
F(x;\alpha, 0, \mu, \delta) = \frac{1}{2} + \frac{\delta e^{\delta \alpha}}{\pi} \frac{(x-\mu)\alpha}{\omega} \left(\sum_{k=0}^{N-1} T_k + \sum_{k=N}^{\infty} T_k\right),
\end{equation}
where
\begin{equation}
T_k = \frac{K_{k+1}(\alpha\omega)}{(2k + 1)!!} z^k, \quad z = \frac{(x-\mu)^2 \alpha}{\omega}.
\end{equation}
and remainder $R_N = \sum_{k=N}^{\infty} T_k$. An integral representation of $R_N$ can be obtained using Basset's integral \cite[\S 10.32.11]{NIST:DLMF} representation of the modified Bessel function, defined as
\begin{equation}\label{basset_integral}
K_{k+1}(\alpha\omega) = \frac{\Gamma(k + 3/2)}{\sqrt{\pi}}\left(\frac{2\alpha}{\omega}\right)^{k+1} \int_0^{\infty} \frac{\cos(\omega t)}{(t^2 + \alpha^2)^{k + 3/2}} \mathop{dt}.
\end{equation}
Basset's integral is chosen to transform the term $\alpha\omega$ into $\alpha/\omega$. Thus, inserting \eqref{basset_integral} into the remainder in \eqref{expansion_xmu_b0_positive_error_term}, we have an integral representation of $R_N$
\begin{align}\label{convergent_remainder_integral}
R_N &= \frac{1}{\sqrt{\pi}} \left(\frac{2\alpha}{\omega}\right) \int_0^{\infty} \frac{\cos(\omega t)}{(t^2 + \alpha^2)^{3/2}} \left[\sum_{k=N}^{\infty} \left(\frac{2z\alpha}{\omega (t^2 + \alpha^2)}\right)^k \frac{\Gamma(k + 3/2)}{(2k + 1)!!}\right] \mathop{dt}\nonumber\\
&= C \int_0^{\infty} \frac{\cos(\omega t)}{(t^2 + \alpha^2)^{N + 3/2}} \frac{1}{\left(2 - \frac{m}{t^2+ \alpha^2}\right)} \mathop{dt}.
\end{align}
where, for the purpose of brevity, we use
\begin{equation}
m = \frac{2z\alpha}{\omega} = 2\left(\frac{(x-\mu)\alpha}{\omega}\right)^2,  \quad C = \frac{2}{\sqrt{\pi}} \left(\frac{2\alpha}{\omega}\right)\frac{\Gamma(N + 3/2)}{(2N + 1)!!} m^N.
\end{equation}

Now, we expand the term $\cos(\omega t)$, recall that $\omega = \sqrt{\delta^2 + (x-\mu)^2}$, yielding
\begin{equation}\label{convergent_remainder_series_integral}
R_N = C \sum_{k=0}^{\infty} \frac{(-1)^k \omega^{2k}}{(2k)!} \int_0^{\infty} \frac{t^{2k}}{(t^2 + \alpha^2)^{N + 3/2}} \frac{1}{\left(2 - \frac{m}{t^2+ \alpha^2}\right)} \mathop{dt},
\end{equation}
and note that the integrals above converge for $k \le N$. Next, after various iterations with the assistance of Mathematica \cite{Mathematica}, we arrive to a closed-form expression for the previous integral in terms of the Gauss hypergeometric function $_2F_1(a, b; c; z)$
\begin{equation}\label{convergent_remainder_inner_integral}
\int_0^{\infty} \frac{t^{2k}}{(t^2 + \alpha^2)^{N + 3/2}} \frac{1}{\left(2 - \frac{m}{t^2+ \alpha^2}\right)} \mathop{dt} = P_k + Q_k,
\end{equation}
with
\begin{align}
P_k &= \frac{2^{N-2}\alpha^{2(k - N - 1)} \Gamma(N + 1 - k) \Gamma(k - 1/2)}{\sqrt{\pi} (2N + 1)!!} \, _2F_1\left(1, N + 1 - k; \frac{3}{2} -k, 1 - \frac{m}{2\alpha^2}\right),\\
Q_k &= \frac{2^{N-1-k}(2\alpha^2 - m)^{k - 1/2} \pi \sec(k\pi)}{m^{N + 1/2}}.
\end{align}

The Gauss hypergeometric function is defined by the hypergeometric series
\begin{equation*}
_2F_1(a, b; c; z) = \sum_{k=0}^{\infty} \frac{(a)_k (b)_k}{(c)_k k!} z^k,
\end{equation*}
where $(a)_k = \Gamma(a+k) / \Gamma(a)$ is the Pochhammer symbol or rising factorial. The series is defined on the disk $|z| < 1$, and by analytic continuation with respect to $z$ elsewhere. Subsequently, substituting \eqref{convergent_remainder_inner_integral} in \eqref{convergent_remainder_series_integral}, we write $R_N$ as the sum of two series $R_N  = S_P + S_Q$ given by
\begin{equation}\label{convergent_remainder_series_P}
S_P = \frac{(2m)^N }{\alpha^{2N + 1} \omega\pi} \frac{\Gamma(N + 3/2)}{(2N + 1)!! (2N - 1)!!} \sum_{k=0}^{N} \frac{(-1)^k (\alpha\omega)^{2k}}{(2k)!} \Gamma(N + 1 - k) \Gamma(k - 1/2) \, _2F_1\left(1, N + 1 - k; \frac{3}{2} -k, 1 - \frac{m}{2\alpha^2}\right)
\end{equation}
and 
\begin{align}
S_Q &= C \frac{2^{N-1}}{m^{N + 1/2}\sqrt{2\alpha^2 - m}} \pi \sum_{k=0}^{\infty} \frac{(-1)^k}{(2k)!} \left(\frac{\omega^2(2\alpha^2 - m)}{2}\right)^k \sec(k\pi)\nonumber\\
&= \frac{\Gamma(N + 3/2) 2^{N+1}}{(2N + 1)!!} \frac{\alpha}{\omega}\sqrt{\frac{\pi}{m(2\alpha^2 - m)}}\cosh\left(\sqrt{\frac{\omega^2 (2\alpha^2 - m)}{2}}\right).
\end{align}
The sum $S_P$ is terminating at $k=N$ due to the term $\Gamma(N + 1 - k)$ in the series. Thus, the resulting series to compute $F(x; \alpha, 0, \mu, \delta)$ is
\begin{equation}\label{convergent_accelerated_series_xmu_b0_positive}
F(x;\alpha, 0, \mu, \delta) = \frac{1}{2} + \frac{\delta e^{\delta \alpha}}{\pi} \frac{(x-\mu)\alpha}{\omega} \left(\sum_{k=0}^{N-1} T_k + S_P + S_Q\right).
\end{equation}
Next, we focus on determining the optimal $N$ for a desired precision $\epsilon$. The smallest term in $S_P$ occurs when $k=N$, corresponding to
\begin{equation}
S_P = \frac{(-1)^N (\alpha\omega)^{2N}}{(2N)!} \Gamma(N - 1/2) \, _2F_1\left(1, 1; \frac{3}{2} -N, 1 - \frac{m}{2\alpha^2}\right).
\end{equation}
Moreover, inspecting the argument of $_2F_1$ in \eqref{convergent_remainder_series_P},
\begin{equation}
1 - \frac{m}{2\alpha^2} = 1 - \left(\frac{x-\mu}{\omega}\right)^2 < 1,
\end{equation}
we see that for $\alpha \omega \to 0$, $\delta \to 0$, the argument is close to 1. Therefore, for the parameter regime of interest, the last term can be effectively approximated taking the limit $\lim_{x \to 0}\, _2F_1\left(1, 1, \frac{3}{2}-k, x\right) = 1$. Then, it remains to solve the following equation for $N$ 
\begin{equation}
\frac{(\sqrt{2m} \omega)^{2N}}{\alpha\omega \pi} \frac{\Gamma(N - 1/2)\Gamma(N + 3/2)}{(2N)!(2N + 1)!! (2N - 1)!!} = \epsilon.
\end{equation}
An asymptotic approximation follows by using Stirling approximation of the ratio of gamma functions and double factorials
\begin{equation}
\frac{\Gamma(N - 1/2)\Gamma(N + 3/2)}{(2N)!(2N + 1)!! (2N - 1)!!} \sim \frac{\sqrt{\pi}}{4} \left(\frac{e}{N}\right)^{2N} 2^{-4N}, \quad N \to \infty.
\end{equation}
obtaining the simplified equation
\begin{equation}
(\sqrt{2m} \omega)^{2N} \left(\frac{e}{N}\right)^{2N} 2^{-4N} = 4\sqrt{\pi}\alpha\omega\epsilon.
\end{equation}
The solution of the latter permits a closed-form in terms of principal branch of the Lambert $W$ function
\begin{equation}\label{N_expansion_xmu_acc}
N \approx - \frac{\log(A)}{W_0\left(-\frac{\log(A)}{2 e \sqrt{2m} \omega} \right)}, \quad A = 4\sqrt{\pi}\alpha\omega\epsilon.
\end{equation}

Table \ref{table_bound_remainder_xmu_b0_positive_large_N} shows the estimated number of terms of the accelerated convergent series \eqref{convergent_accelerated_series_xmu_b0_positive}, $N_{acc}$, and the actual number of terms $N^*$ to achieve machine-precision for small values of $\alpha\omega$ and $\delta$. The first and most notable observation is the reduction in the number of terms compared with the convergent series \eqref{N_expansion_xmu_b0_positive}, especially for $\alpha \le 1$. The second observation is the accuracy of the estimate in $N_{acc}$ in \eqref{N_expansion_xmu_b0_positive}, which only seems to underestimate $N$ for large $\alpha\omega$ slightly.

\begin{table}[H]
	\centering
	\scalebox{0.9}{
\begin{tabular}{ccccc|ccccc}
\hline
$x$ & $\alpha$ & $\mu$ & $\delta$ & $\alpha\omega$ & $N$ \eqref{N_expansion_xmu_b0_positive} & $N_{acc}$ \eqref{N_expansion_xmu_acc} & error & $N^*$ & error\\
\hline
	1 & 50 & 1/5 & 1/3 & 43.33 & 324 & 69 & $2.9\cdot 10^{-13}$ & 73 & $1.1\cdot 10^{-17}$\\
    2 & 5 & 1/5 & 1/10 & 9.01 & 10,242 & 25 & $4.6\cdot 10^{-21}$ & 22 & $1.1 \cdot 10^{-16}$\\
	1 & 1/10 & 1/5 & 1/100 & 0.08 & 179,715 & 5 & $1.3 \cdot 10^{-21}$ & 4 & $2.4 \cdot 10^{-17}$\\
	5 & 1 & 1/5 & 1/100 & 4.8 & 5,645,686 & 18 & $1.1 \cdot 10^{-22}$ & 15 & $1.5 \cdot 10^{-17}$\\
	20 & 1/100 & 1/5 & 1/100 & 0.198 & 84,914,922 & 6 & $1.1 \cdot 10^{-22}$ & 5 & $4.5 \cdot 10^{-19}$\\
	\hline
	\end{tabular}}
	\caption{Absolute error and bound \eqref{bound_remainder_xmu_b0_positive} estimating $N$ using \eqref{N_expansion_xmu_b0_positive} for the series expansion  \eqref{expansion_xmu_b0_positive} with machine-precision absolute error.}
	\label{table_bound_remainder_xmu_b0_positive_large_N}
\end{table}

\begin{remark}
A simpler bound for $R_N$, compared to \eqref{bound_remainder_xmu_b0_positive}, can be derived from the integral representation in \eqref{convergent_remainder_integral} as follows
\begin{align*}
R_N &= C \int_0^{\infty} \frac{\cos(\omega t)}{(t^2 + \alpha^2)^{N + 3/2}} \frac{1}{\left(2 - \frac{m}{t^2+ \alpha^2}\right)} \mathop{dt}\\
&\le \frac{C}{\left(2 - \frac{m}{\alpha^2}\right)} \int_0^{\infty} \frac{\cos(\omega t)}{(t^2 + \alpha^2)^{N + 3/2}} \mathop{dt}\\
&= \frac{1}{1 - \left(\frac{x-\mu}{\omega}\right)^2} T_N
\end{align*}

using the upper bound for $T_N$ in \eqref{bound_TN_xmu_b0_positive}, we obtain
\begin{equation}
R_N < \frac{1}{1 - \left(\frac{x-\mu}{\omega}\right)^2} \frac{1}{2\alpha\omega}\left(\frac{x-\mu}{\omega}\right)^{2N} \sqrt{\frac{\pi}{N + 1/2}}.
\end{equation}
\end{remark}

\subsubsection{Expansion $|x-\mu| \to \infty$}
To derive an asymptotic expansion for large $|x-\mu|$, we use the expansion derived in \eqref{phi_expansion_incgamma_t_small} as a starting point, taking $a = x-\mu$ and $b = 0$. Similar to the derivation of the expansion in \eqref{expansion_xmu_b0_alternating}, we interchange summation and integration, and the resulting integral is expressible in closed-form as a modified Bessel function, see Equation \eqref{bessel_integral}. Thus, for $x - \mu < 0$ we have the alternating asymptotic expansion
\begin{align}\label{asymptotic_expansion_xmu_b0}
F(x; \alpha, 0, \mu, \delta) &= \frac{\delta e^{\delta \alpha}}{2\pi} \sum_{k=0}^{\infty} \frac{(-1)^{k+1}}{k!} \frac{\Gamma(2k + 1)}{2^k (x-\mu)^{2k + 1}} \int_0^{\infty} t^{k-1} e^{-\frac{\omega^2}{2t} - \frac{\alpha^2}{2}t} \mathop{dt} \nonumber \\
&= -\frac{\delta e^{\delta \alpha}}{\pi (x-\mu)} \sum_{k=0}^{\infty}\frac{(-1)^k\Gamma(2k + 1)}{k!} \left(\frac{\omega}{2(x-\mu)^2\alpha}\right)^k K_k(\alpha \omega).
\end{align}
Note that the remainder is bounded in magnitude by the first neglected term. Moreover, the convergence of the expansion improves for large $\alpha$ and $\alpha / \omega > 1$. Finally, for the case $x - \mu > 0$ we use the reflection formula \eqref{cdf_mirror}.

\subsubsection{Uniform expansion $\alpha \to \infty$, $\alpha \sim \delta$ and $|x-\mu| \gg 0$}\label{uniform_expansion_alpha_large}
For large $\alpha$, we consider the uniform asymptotic expansion in terms of modified Bessel functions described in \cite{Temme1990c} and \cite[\S 27]{Temme2015}. We write the Laplace-type integral \eqref{integral_phi} in the standard form
\begin{equation*}
F_{\lambda}(z, r) = C\int_0^{\infty} t^{\lambda - 1} e^{-z\left(t + r^2/t\right)} f(t) \mathop{dt},
\end{equation*}
where $C$ is a normalizing constant, $\lambda=-1/2$, $z = \alpha^2/2$, $r=\delta/\alpha$ and $f(t) = \Phi((x-\mu)/\sqrt{t})$. The saddle point of $e^{-z\left(t + r^2/t\right)}$ occurs at $\pm r$, but only the positive saddle point $r$ lies inside the interval of integration. Thus, we expand $f(t)$ at the saddle point $r$
\begin{equation*}
f(t) = \sum_{k=0}^{\infty} c_k(r)(t-r)^k,
\end{equation*}
after interchanging the order of summation and integration, we obtain 
\begin{equation}
F_{\lambda}(z, r) \sim \frac{1}{z^{\lambda}} \sum_{k=0}^{\infty} \frac{c_k(r) Q_k(\zeta)}{z^k}, \quad z \to \infty,
\end{equation}
where
\begin{equation*}
Q_k(\zeta) = \zeta^{\lambda + k}\int_0^{\infty} t^{\lambda - 1}(t-1)^k e^{-\zeta(t + 1/t)} \mathop{dt}, \quad \zeta = rz.
\end{equation*}

For $f(t) = \Phi((x-\mu)/\sqrt{t})$ the coefficients at $t=r$ satisfy the recurrence in \eqref{phi_expansion_at_u} setting $a=x-\mu$ and $b=0$. In particular, the recurrence can be simplified as follows
\begin{equation}
c_0(r) = \Phi\left(\frac{x-\mu}{\sqrt{r}}\right), \quad c_1(r) = -\frac{(x-\mu)}{2 r^{3/2}} \phi\left(\frac{x-\mu}{\sqrt{r}}\right)
\end{equation}
and
\begin{equation}
c_k(r) = \frac{(k - 1) ((x-\mu)^2 - 4 r(k-2) - 3r) c_{k-1}(r) - (2(k-2)^2 + k-2) c_{k-2}(r)}{2r^2 (k-1) k}, \quad k\ge 2.
\end{equation}
The functions $Q_k(\zeta)$ can be expressed as a binomial sum of modified Bessel functions, and satisfy the recurrence relation \cite[\S 27.3.28]{Temme2015}
\begin{equation}
Q_{k+2}(\zeta) = \left(k + \frac{1}{2} -2\zeta\right) Q_{k+1}(\zeta) + \zeta\left(2k + \frac{1}{2}\right)Q_k(\zeta) + k\zeta^2 Q_{k-1}(\zeta), \quad k\ge 1,
\end{equation}
with initial values
\begin{equation}
Q_0(\zeta) = \frac{2}{\sqrt{\zeta}} K_{\frac{1}{2}}(2 \zeta), \quad Q_1(\zeta) = 0, \quad Q_2(\zeta) = 2 \zeta^{3/2} \left(K_{\frac{3}{2}}(2\zeta) - K_{\frac{1}{2}}(2 \zeta)\right),
\end{equation}
where the special case $K_{n+1/2}(z)$, $n \in \mathbb{N}$, is a terminating sum of elementary functions requiring $n$ terms, see \eqref{besselk_half}. In particular the cases $n=0$ and $n=1$ are
\begin{equation*}
K_{\frac{1}{2}}(z) = \sqrt{\frac{\pi}{2z}}e^{-z}, \quad K_{\frac{3}{2}}(z) = \sqrt{\frac{\pi}{2 z}}\frac{e^{-z} (z+1)}{z},
\end{equation*}
and subsequent terms can be computed via recursion.

Thus, rearranging terms we have
\begin{equation}\label{uniform_expansion_a_large}
F(x; \alpha, 0, \mu, \delta) = \frac{\alpha \delta e^{\delta \alpha}}{2 \sqrt{\pi}} \sum_{k=0}^{\infty} \frac{2^k c_k\left(\frac{\delta}{\alpha}\right) Q_k\left(\frac{\alpha \delta}{2}\right)}{\alpha^{2k}}, \quad \alpha \to \infty.
\end{equation}
The expansion \eqref{uniform_expansion_a_large} is a uniform expansion as $\alpha \to \infty$, uniformly with respect to $\delta / \alpha > 0$. Note that large values of $\delta$ improves the rate of convergence of the expansion, as observed taking well-know asymptotic estimates for large argument of the modified Bessel function. We remark that expansions \eqref{expansion_xmu_b0_alternating} and \eqref{expansion_xmu_b0_positive} are also adequate for large values of $\alpha$ and $\delta$, but unlike the present expansion, the number of terms increases significantly when $|x-\mu| \gg 0$.

\subsection{Special case $x = \mu$}
The second special case of interest, $x = \mu$, is arguably less common and has fewer applications, but it is nonetheless required to obtain a robust implementation. In particular, the attentive reader might have noticed that none of the series expansions previously introduced are suitable for this specific case.

First, consider the case $x = \mu$ and $\beta = 0$, corresponding to the case where the distribution is symmetric and centred at $x = \mu$, consequently it follows that
\begin{equation}
F(\mu; \alpha, 0, \mu, \delta) = \frac{1}{2}.
\end{equation}

Secondly, for the case $\beta \neq 0$, the integral representation of this special case is given by simple substitution in \eqref{integral_k1}
\begin{equation}\label{integral_k1_x=mu}
F(\mu; \alpha, \beta, \mu, \delta) = \frac{\alpha \delta e^{\delta \gamma}}{\pi} \int_{-\infty}^{0} \frac{K_1\left(\alpha\sqrt{\delta^2 + t^2}\right)}{\sqrt{\delta^2 + t^2}} e^{\beta t} \mathop{dt}.
\end{equation}

Using the series expansion of the exponential function and interchanging the order of integration and summation in \eqref{integral_k1_x=mu} gives that
\begin{equation*}
F(\mu; \alpha, \beta, \mu, \delta) = 1 - \frac{\alpha \delta e^{\delta \gamma}}{\pi} \sum_{k=0}^{\infty}\frac{\beta^k}{k!}\int_{0}^{\infty} t^k \frac{K_1\left(\alpha\sqrt{\delta^2 + t^2}\right)}{\sqrt{\delta^2 + t^2}} \mathop{dt}.
\end{equation*}
The integral is expressible in closed form using \cite[\S 6.596]{gradshteyn2007}
\begin{equation*}
\int_0^{\infty} t^k \frac{K_1\left(\alpha\sqrt{\delta^2 + t^2}\right)}{\sqrt{\delta^2 + t^2}} \mathop{dt} = \frac{2^{\frac{k-1}{2}} \Gamma\left(\frac{k+1}{2}\right)}{\alpha^{\frac{k+1}{2}} \delta^{\frac{-k+1}{2}}}K_{\frac{k-1}{2}}(\alpha\delta),
\end{equation*}
and rearranging terms and using the connection formula $2^{k/2}  \Gamma\left(\frac{k+1}{2}\right)/ k! = \sqrt{\pi} 2^{-k/2} / \Gamma\left(\frac{k}{2} + 1\right)$ yields
\begin{align}
F(\mu; \alpha, \beta, \mu, \delta) &= 1 - \sqrt{\frac{\alpha \delta}{2\pi}}e^{\delta \gamma} \sum_{k=0}^{\infty}\frac{\beta^k}{2^{\frac{k}{2}}\Gamma\left(\frac{k}{2} + 1\right)} \left(\frac{\delta}{\alpha}\right)^{\frac{k}{2}} K_{\frac{k-1}{2}}(\alpha \delta)\\
&=\sqrt{\frac{\alpha \delta}{2\pi}}e^{\delta \gamma} \sum_{k=0}^{\infty}\frac{(-\beta)^k}{2^{\frac{k}{2}}\Gamma\left(\frac{k}{2} + 1\right)} \left(\frac{\delta}{\alpha}\right)^{\frac{k}{2}} K_{\frac{k-1}{2}}(\alpha \delta).
\end{align}
The resulting expansions are convergent and the latter series expansion is preferred for $\beta < 0$ to avoid cancellation errors. A more rapidly convergent expansion for $\delta < \gamma$ or large values of $\delta$ and $\gamma$ can be obtained using the integral \eqref{integral_phi} and expanding the term $\Phi(-\beta\sqrt{t})$. Replacing $\Phi(-\beta\sqrt{t})$ in the integral \eqref{integral_phi} with the expansion \eqref{phi_expansion_1} and interchanging the order of integration and summation, we obtain
\begin{equation}\label{expansion_x=mu_pre}
F(\mu; \alpha, \beta, \mu, \delta) = \frac{1}{2} + \frac{\delta e^{\delta \gamma}}{2\pi}\sum_{k=0}^{\infty}\frac{(-1)^k (-\beta)^{2k+1}}{2^k k! (2k+1)} \int_0^{\infty} t^{k - 1} e^{-\frac{\delta^2}{2t} - \frac{\gamma^2}{2}t} \mathop{dt},
\end{equation}
where we can express the integral in terms of the modified Bessel function \eqref{bessel_integral}. Plugging in \eqref{expansion_x=mu_pre}, now yields the alternating series
\begin{equation}\label{series_x=mu_1}
F(\mu; \alpha, \beta, \mu, \delta) = \frac{1}{2} + \frac{\delta e^{\delta \gamma}}{\pi} \sum_{k=0}^{\infty} \frac{(-1)^k (-\beta)^{2k+1}}{2^k k! (2k + 1)} \left(\frac{\delta}{\gamma}\right)^k K_k(\gamma \delta).
\end{equation}
Similarly, using \eqref{phi_expansion_2} yields
\begin{equation}\label{series_x=mu_2}
F(\mu; \alpha, \beta, \mu, \delta) = \frac{1}{2} + \frac{\delta e^{\delta \gamma}}{\pi} \sum_{k=0}^{\infty} \frac{(-\beta)^{2k+1}}{(2k + 1)!!} \left(\frac{\delta}{\alpha}\right)^k K_k(\alpha \delta).
\end{equation}
The latter expansion being more convenient since $\alpha \ge \gamma$. Finally, to complement the series expansion, a special case of the asymptotic expansion defined in Equation \eqref{expansion_x_eq_mu_large_delta}, detailed in the next section, can be employed for large values of $\delta$ and large $\alpha$, and its convergence improves as $\beta \sim \alpha$.

\subsection{General case}\label{section_general_case}
In the general case, we continue using similar techniques to devise a set of series expansions and asymptotic expansions. Two of the series expansions involving Hermite polynomials make use of the two special cases studied in detail in the preceding sections.

\subsubsection{Expansions $|x-\mu| \to 0$ in terms of modified Bessel functions}
We start deriving a series expansion for small $|x-\mu|$ following similar steps to those proposed for the special case $\beta = 0$. In the same manner, we use the expansion of $\Phi(x)$ in  \eqref{phi_expansion_2} to obtain
\begin{equation}\label{general_series_xmu_zero_integral}
F(x;\alpha, \beta, \mu, \delta) = \frac{1}{2} + \frac{\delta e^{\delta \gamma}}{2\pi} \sum_{k=0}^{\infty} \frac{1}{(2k + 1)!!}I_k,
\end{equation}
\begin{equation}\label{general_xmu_zero_integral}
I_k = \int_0^{\infty} \left(\frac{x - (\mu +\beta t)}{\sqrt{t}}\right)^{2k+1} t^{-3/2} e^{-\frac{\delta^2}{2t} - \frac{\gamma^2}{2}t} e^{- \frac{(x-(\mu + \beta t)^2}{2t}} \mathop{dt}.
\end{equation}
The series expansion written in a more standard form is given by
\begin{equation}\label{general_series_xmu_zero_integral_2}
F(x;\alpha, \beta, \mu, \delta) = \frac{1}{2} + \frac{\delta e^{\delta \gamma + (x -\mu)\beta}}{2\pi}\sum_{k=0}^{\infty} \frac{(-\beta)^{2k+1}}{(2k + 1)!!}\int_0^{\infty} \left(t - \frac{x-\mu}{\beta}\right)^{2k+1} t^{-k-2} e^{-\frac{\omega^2}{2t} - \frac{\alpha^2}{2}t} \mathop{dt},
\end{equation}
where we denote $J_k$ the integral just introduced
\begin{equation}
J_k = \int_0^{\infty} \left(t - \frac{x-\mu}{\beta}\right)^{2k+1} t^{-k-2} e^{-\frac{\omega^2}{2t} - \frac{\alpha^2}{2}t} \mathop{dt}.
\end{equation}
The integral $J_k$ is expressible in terms of a finite series of modified Bessel functions after applying the binomial expansion
\begin{equation}
J_k = 2 \left(\frac{-(x-\mu)}{\beta}\right)^{2k+1} \left(\frac{\alpha}{\omega}\right)^{k+1} \sum_{j=0}^{2k+1} (-1)^j \binom{2k+1}{j} \left(\frac{\omega \beta}{\alpha (x-\mu)}\right)^j K_{k + 1 - j}(\alpha \omega).
\end{equation}
Thus, we have the series expansion
\begin{equation}\label{general_expansion_xmu_small_bessel}
F(x;\alpha, \beta, \mu, \delta) = \frac{1}{2} + \frac{(x-\mu)\alpha\delta e^{\delta \gamma + (x -\mu)\beta}}{\pi \omega}\sum_{k=0}^{\infty} \frac{A_k}{(2k+1)!!}\left(\frac{(x-\mu)^2\alpha}{\omega}\right)^k
\end{equation}
\begin{equation}
A_k = \sum_{j=0}^{2k+1} (-1)^j \binom{2k+1}{j} \left(\frac{\omega \beta}{\alpha (x-\mu)}\right)^j K_{k + 1 - j}(\alpha \omega)
\end{equation}
Observe that the convergence of the series improves for large values of $\alpha$ and $\delta$, hence it can be seen as a uniform asymptotic expansion with respect to these parameters. The first three coefficients $A_k$ are
\begin{align*}
A_0 &= -u K_0(z) + K_1(z)\\
A_1 &= 3u^2 K_0(z) - u(3 + u^2) K_1(z) + K_2(z)\\
A_2 &= -10u^3 K_0(z) + 5u^2(2 + u^2) K_1(z) - u(5 + u^4) K_2(z) + K_3(z)
\end{align*}
where, to compact notation, we use $u$ and $z$ defined as
\begin{equation}
u = \frac{\omega \beta}{\alpha (x-\mu)}, \quad z = \alpha \omega.
\end{equation}

Furthermore, the series expansion can be written as a truncated series with remainder. Using the representation in \eqref{general_series_xmu_zero_integral_2} we have a series in the form
\begin{equation}
\sum_{k=0}^{\infty} T_k = \sum_{k=0}^{N-1}T_k + \sum_{k=N}^{\infty} T_k, \quad T_k = \frac{(-\beta)^{2k+1}}{(2k+1)!!} I_k.
\end{equation}
The remainder, $R_N = \sum_{k=N}^{\infty} T_k$, integral representation is obtained as
\begin{align}\label{general_xmu_remainder_integral}
R_N &= \int_0^{\infty} e^{-\frac{\omega^2}{2t} - \frac{\alpha^2}{2}t} \left[ \sum_{k=N}^{\infty}\frac{(-\beta)^{2k+1}}{(2k+1)!!} \left(t - \frac{x-\mu}{\beta}\right)^{2k+1} t^{-k-2}\right] \mathop{dt}\\
&= \frac{(-1)^{2N + 1} 2^{N - 1/2} (2N + 1)}{(2N + 1)!!} e^{-\beta(x-\mu)} \int_0^{\infty} t^{-3/2} e^{-\frac{\delta^2}{2t} - \frac{\gamma^2}{2}t} \gamma\left(N + 1/2 , \frac{(\beta t - (x-\mu))^2}{2t}\right) \mathop{dt},
\end{align}
where $\gamma(a, z)$ is the lower incomplete gamma function. It remains to bound the remainder $R_N$. A simple but weak bound, since the dependence on $N$ disappears, is obtained by using the regularized incomplete gamma function $P(a,z) \Gamma(a) = \gamma(a, z)$ and the bound $P(a, z) \le 1$ for $z \ge 0$ (recall the asymptotic estimate $P(a,z) \to 1$ as $z \to \infty$). A sharper bound for $\gamma(a, z)$ derived from the asymptotic expansion for $a > z$ in \cite[\S 7.3]{Temme2015}, $\gamma(a, z) \sim \frac{z^a e^{-z}}{a}$ as $a \to \infty$, is given by
\begin{equation}\label{lower_incomplete_gamma_bound}
\gamma(a, z) \le \frac{z^a e^{-z}}{a - z - 1}, \quad a > z + 1, \quad z > 0.
\end{equation}
Since the use of this bound introduces a term involving $t$ in the denominator, complicating subsequent calculations, we use the asymptotic estimate instead. Thus, $|R_N|$ is approximated as
\begin{align}
|R_N| &\approx \frac{e^{-\beta(x-\mu)}}{(2N + 1)!!} I_N \label{remainder_approx1}\\
&\approx \frac{2 \beta^{2N + 1}}{(2N + 1)!!}\left(\frac{\omega}{\alpha}\right)^N K_N(\alpha \omega) \quad (|x-\mu| \to 0) \label{remainder_approx2},
\end{align}
and the latter approximation of $|R_N|$ can be simplified further by considering the asymptotic estimate in \eqref{besselk_order_to_inf}, which gives us
\begin{equation}
|R_N| \approx \sqrt{\frac{2\pi}{N}} \left(\frac{e\alpha^2}{2N}\right)^{-N} \frac{\beta^{2N + 1}}{(2N + 1)!!}.
\end{equation}
Observe that the approximation is only adequate for the case $\delta \to 0$ since its contribution is removed as a result of applying the specified asymptotic estimate. Table \ref{table_bound_remainder_xmu_general} compares the absolute value of the remainder in expression \eqref{general_xmu_remainder_integral}, $|R_N|$, with the upper bound using \eqref{lower_incomplete_gamma_bound} and the approximations in \eqref{remainder_approx1} and \eqref{remainder_approx2} for different sets of parameters with small $|x-\mu|$. We can see that the bound \eqref{lower_incomplete_gamma_bound} is sharp for all cases considered, but requires the evaluation of an integral in terms of non-elementary functions. On the other hand, both approximations are close, only an order of magnitude off in most of the cases, and the approximation in terms of the modified Bessel function \eqref{remainder_approx2} might be seen as a suitable and fast approximation at least for small $\delta$.

\begin{table}[H]
	\centering
	\scalebox{0.9}{
\begin{tabular}{ccccc|ccccc}
\hline
$x$ & $\alpha$ & $\beta$ & $\mu$ & $\delta$ & $N$ & $R_N$ & Bound \eqref{lower_incomplete_gamma_bound} & \eqref{remainder_approx1} & \eqref{remainder_approx2}\\
\hline
	1/2 & 2 & 1 & 1/4 & 3 & 10 & $4.7\cdot 10^{-9}$ & $5.6\cdot 10^{-9}$ & $3.6\cdot 10^{-9}$ & $9.9\cdot 10^{-9}$\\
	1/3 & 5 & -1 & 1/4 & 1 & 50 & $2.5\cdot 10^{-72}$ & $2.6\cdot 10^{-72}$ & $2.4\cdot 10^{-73}$ & $2.8\cdot 10^{-73}$\\
	4 & 15 & -6 & 7/2 &  10 & 20 & $2.7\cdot 10^{-60}$ & $5.6\cdot 10^{-60}$ & $6.5\cdot 10^{-61}$ & $8.6\cdot 10^{-63}$\\
2/10 & 1 & 1/2 & 1/10 & 1/ 2 & 50 &  $1.1\cdot 10^{-33}$ & $1.1\cdot 10^{-33}$ & $7.9\cdot 10^{-34}$ & $9.8\cdot 10^{-34}$\\
	\hline
	\end{tabular}}
	\caption{Comparison of various approximations (\eqref{remainder_approx1} and \eqref{remainder_approx2}) and the upper bound \eqref{lower_incomplete_gamma_bound} for the estimation of the remainder \eqref{general_xmu_remainder_integral}.}
	\label{table_bound_remainder_xmu_general}
\end{table}


\subsubsection{Expansion $|x-\mu| \to 0$ in terms of modified Bessel and incomplete gamma functions}
In the following, we present an alternative derivation of a series expansion for small $|x-\mu|$ where the series coefficients are easier to compute via recurrence relations. The starting point is the integral representation in \eqref{integral_phi} after expanding the exponential
and replacing the term $\Phi\left(\frac{x - (\mu +\beta t)}{\sqrt{t}}\right)$ by the expansion in \eqref{phi_expansion_incgamma} in terms of upper incomplete gamma function $\Gamma(a, z)$. The resulting series is
\begin{equation}\label{general_xmu_incgamma_integral}
F(x;\alpha, \beta, \mu, \delta) = \frac{\delta e^{\delta \gamma}}{\sqrt{2\pi}} \sum_{k=0}^{\infty}\frac{2^{k/2}(x-\mu)^k}{k!}\int_0^{\infty}\Gamma\left(\frac{k+1}{2}, \frac{\beta^2}{2}t\right) t^{-3/2-k/2} e^{-\frac{\omega^2}{2t} - \frac{\gamma^2}{2}t} \mathop{dt}.
\end{equation}
Next, taking the ascending series of the incomplete gamma function given by \cite[\S 8.7]{NIST:DLMF}
\begin{equation}
\Gamma(a, z) = \Gamma(a) - \gamma(a, z) = \Gamma(a) - \sum_{j=0}^{\infty} \frac{(-1)^j z^{a+j}}{j! (a+ j)},
\end{equation}
and inserting it into \eqref{general_xmu_incgamma_integral}, splitting the inner integral into two terms
\begin{align}
T_1 &= \Gamma\left(\frac{k+1}{2}\right)\int_0^{\infty}t^{-3/2-k/2} e^{-\frac{\omega^2}{2t} - \frac{\gamma^2}{2}t} \mathop{dt},\\
T_2 &= \sum_{j=0}^{\infty} \frac{(-1)^j \left(\frac{\beta^2}{2}\right)^{\frac{k+1}{2}+j}}{j! (\frac{k+1}{2}+ j)}\int_0^{\infty} t^{j-1} e^{-\frac{\omega^2}{2t} - \frac{\gamma^2}{2}t} \mathop{dt}.
\end{align}
Both integrals are expressible in terms of modified Bessel functions, resulting in the sums $S_1$ and $S_2$, respectively, such that $F(x;\alpha, \beta, \mu, \delta) = C (S_1 - S2)$, $C$ being a constant. These series are defined as follows
\begin{align}
S_1 &= \sum_{k=0}^{\infty}\frac{2^{k/2}(x-\mu)^k}{k!} \Gamma\left(\frac{k+1}{2}\right)2 K_{\frac{k+1}{2}}(\omega \gamma) \left(\frac{\gamma}{\omega}\right)^{\frac{k+1}{2}},\\
S_2 &= \sum_{k=0}^{\infty}\frac{2^{k/2}(x-\mu)^k}{k!}\sum_{j=0}^{\infty}\frac{(-1)^j \left(\frac{\beta^2}{2}\right)^{\frac{k+1}{2}+j}}{j! (\frac{k+1}{2}+ j)} 2 K_j(\omega \gamma) \left(\frac{\omega}{\gamma}\right)^j.
\end{align}
Interchanging the order of summation in $S_2$, we observe that the sum over $k$ is convergent an expressible in terms of the lower incomplete gamma function $\gamma(a, x)$. Assuming $\beta > 0$ (otherwise, use \eqref{cdf_mirror}), we have
\begin{equation}
\sum_{k=0}^{\infty}\frac{2^{k/2}(x-\mu)^k}{k! (\frac{k+1}{2}+ j)} \left(\frac{\beta^2}{2}\right)^{\frac{k+1}{2}} = -\frac{\sqrt{2}}{(x-\mu)^{2j+1}\beta^{2j}} \gamma \left(2j + 1, -(x-\mu)\beta\right).
\end{equation}
Thus,
\begin{equation}
S_2 = -2\sqrt{2} \sum_{j=0}^{\infty} \frac{(-1)^j}{j!}\frac{\gamma \left(2j + 1, -(x-\mu)\beta\right)}{(x-\mu)^{2j+1}\beta^{2j}} \left(\frac{\beta^2}{2}\right)^j  K_j(\omega \gamma) \left(\frac{\omega}{\gamma}\right)^j.
\end{equation}
Rearranging terms, we obtain
\begin{align}\label{general_xmu_two_series}
F(x;\alpha, \beta, \mu, \delta) = \frac{\delta e^{\delta \gamma}}{\pi \sqrt{2}} \bigg[& \sum_{k=0}^{\infty}\frac{2^{k/2}(x-\mu)^k}{k!} \Gamma\left(\frac{k+1}{2}\right) K_{\frac{k+1}{2}}(\omega \gamma) \left(\frac{\gamma}{\omega}\right)^{\frac{k+1}{2}}\nonumber\\
& + \sqrt{2}\sum_{k=0}^{\infty} \frac{(-1)^k}{k!}\frac{\gamma \left(2k + 1, -(x-\mu)\beta\right)}{(x-\mu)^{2k+ 1}}  K_k(\omega \gamma) \left(\frac{\omega}{2\gamma}\right)^k \bigg]
\end{align}
The expansion converges rapidly for small $|x-\mu|$ and fixed values of the rest of parameters. Moreover, the convergence improves when $\gamma \sim \omega$, also valid for large values of these two parameters. In principle, the computation of the lower incomplete gamma functions in the second series $S_2$ can be performed using the recurrence relation
\begin{equation}
\gamma(a +1, z) = a \gamma(a, z) - z^a e^{-z}.
\end{equation}
However, one must take into account that forward recurrence for positive parameters $a$ and $z$ is unstable; we refer to \cite[\S 13]{Temme1996} for details on how to deal with unstable recurrence relations. Compared to the previous expansion in \eqref{general_expansion_xmu_small_bessel}, where modified Bessel functions are cached to avoid recalculations when evaluating coefficients $A_k$, this series expansion is faster to evaluate since all the terms can be computed recursively.

\subsubsection{Expansion $|x-\mu| \to 0$ in terms of Hermite polynomials}
Furthermore, we obtain an additional series expansion in terms of Hermite polynomials and the special case $x=\mu$, $F(\mu; \alpha, \beta, \mu, \delta)$. As in previous derivations, our starting point is the substitution of the term $\Phi\left(\frac{x - (\mu +\beta t)}{\sqrt{t}}\right)$ in \eqref{integral_phi}, this time,
by the Hermite-type expansion derived in \eqref{phi_expansion_a_small}. After inserting the expansion we have
\begin{equation}
F(x; \alpha, \beta, \mu, \delta) = F(\mu; \alpha, \beta, \mu, \delta) + \frac{\delta e^{\delta \gamma}}{2\pi} \sum_{k=0}^{\infty}\frac{(-1)^k (x-\mu)^{k+1}}{(k + 1)! 2^{k/2}}\int_0^{\infty} t^{-k/2 - 2} H_k\left(k, -\beta\sqrt{\frac{t}{2}}\right) e^{-\frac{\delta^2}{2t} - \frac{\alpha^2}{2}t} \mathop{dt}.
\end{equation}
Denote $I_k$ the integral in the series coefficients. The finite power series of $H_k(x)$ \cite[\S 18.5.13]{NIST:DLMF} allows the representation of $I_k$ as a finite series of modified Bessel functions. In this way, we obtain
\begin{align}
I_k  &= \int_0^{\infty} t^{-k/2 - 2} H_k\left(k, -\beta\sqrt{\frac{t}{2}}\right) e^{-\frac{\delta^2}{2t} - \frac{\alpha^2}{2}t} \mathop{dt}\\
&= 2k! \sum_{j=0}^{\lfloor k/2 \rfloor} \frac{(-1)^j}{j!(k - 2j)!} (-\beta\sqrt{2})^{k - 2j} \left(\frac{\alpha}{\delta}\right)^{j+1} K_{j+1}(\alpha \delta).
\end{align}
After rearranging terms, the series expansion reads as
\begin{equation}\label{general_xmu_small_hermite_series}
F(x; \alpha, \beta, \mu, \delta) = F(\mu; \alpha, \beta, \mu, \delta) + \frac{e^{\delta \gamma} (x-\mu) \alpha}{\pi} \sum_{k=0}^{\infty} \frac{(\beta(x-\mu))^k}{k+1}A_k,
\end{equation}
\begin{equation}
A_k = \sum_{j=0}^{\lfloor k/2 \rfloor} \frac{(-1)^j}{j!(k - 2j)!} \left(\frac{\alpha}{2\delta \beta^2}\right)^j K_{j+1}(\alpha \delta).
\end{equation}

Table \ref{table_xmu_general_n_terms} compares the number of terms of the three presented series expansion for the case $|x-\mu| \to 0$ for various sets of parameters to achieve a machine-precision target accuracy. As a first observation, we note that the Hermite-type expansion \eqref{general_xmu_small_hermite_series} appears to be the most efficient when $|x-\mu| \ll 1$. Also, in this regime, both series expansions \eqref{general_xmu_two_series} and \eqref{general_expansion_xmu_small_bessel} require a similar number of terms $N$. On the other hand, in the region $|x-\mu| \ge 1$, the expansion \eqref{general_expansion_xmu_small_bessel} becomes the most efficient in the cases considered. 


\begin{table}[H]
	\centering
	\scalebox{0.9}{
\begin{tabular}{ccccc|ccc}
\hline
$x$ & $\alpha$ & $\beta$ & $\mu$ & $\delta$ & \eqref{general_expansion_xmu_small_bessel} & \eqref{general_xmu_two_series} & \eqref{general_xmu_small_hermite_series}\\
\hline
	1/2 & 2 & 1 & 1/4 & 3 & 27 & 32 & 15\\
	1/3 & 5 & -1 & 1/4 & 1 & 12 & 17 & 14\\
	4 & 15 & -6 & 7/2 &  10 & 63 & 67 & 17\\
	2/10 & 1 & 1/2 & 1/10 & 1/2 & 22 & 25 & 19\\
	2 & 3 & 2 & 1 & 2 & 42 & 152 & 52\\
	3 & 5 & 1/2 & 1 & 4 & 29 & 63 & 66\\
	\hline
	\end{tabular}}
	\caption{Comparison on the number of terms to achieve machine precision with the three series expansions for $|x-\mu| \to 0$.}
	\label{table_xmu_general_n_terms}
\end{table}

Note, however, that a complexity analysis gives a different picture. Given a number of terms $N$:
\begin{itemize}
\item Series expansion \eqref{general_expansion_xmu_small_bessel}: $(N + 1) (N + 2) \sim \mathcal{O}(N^2)$
\item Series expansion \eqref{general_xmu_two_series}: $N_1 + N_2 \sim \mathcal{O}(N)$
\item Series expansion \eqref{general_xmu_small_hermite_series}: $\frac{1}{2}\left(\lfloor\frac{N - 1}{2} \rfloor \left(\lfloor\frac{N - 1}{2} \rfloor + 1\right) + \lfloor\frac{N}{2} \rfloor \left(\lfloor\frac{N}{2} \rfloor + 1\right) + 2 (N + 1)\right) \sim \mathcal{O}(N^2)$
\end{itemize}
We turn our attention to the series expansions \eqref{general_expansion_xmu_small_bessel} and \eqref{general_xmu_small_hermite_series} because the fact that both require the storage of modified Bessel functions make them directly comparable. We remark that, although the series \eqref{general_expansion_xmu_small_bessel} might require fewer terms in some cases, asymptotically, it requires 4 times more terms than \eqref{general_xmu_small_hermite_series}; indeed taking the limit we have
\begin{equation*}
\lim_{N\to\infty} \frac{(N + 1) (N + 2)}{\frac{1}{2}\left(\lfloor\frac{N - 1}{2} \rfloor \left(\lfloor\frac{N - 1}{2} \rfloor + 1\right) + \lfloor\frac{N}{2} \rfloor \left(\lfloor\frac{N}{2} \rfloor + 1\right) + 2 (N + 1)\right)} = 4.
\end{equation*}
Note, however, that the Hermite series involves the evaluation of $F(\mu; \alpha, \beta, \mu, \delta)$, which can potentially increase the total time. These observations shall be taken into consideration when designing the algorithm for the general case in Section \ref{subsection_implementation}.

\subsubsection{Expansion $\beta \to 0$} 
For the case $\beta \to 0$, we follow closely the derivation of the Hermite-type expansion for $|x-\mu| \to 0$ in \eqref{general_xmu_small_hermite_series}, hence some steps are omitted. In this case, the expansion involves the special case $\beta = 0$ studied in detail in Section \ref{section_special_case_beta_0}. Replacing the Hermite-type expansion defined in \eqref{phi_expansion_b_small} in integral \eqref{integral_phi}, we obtain
\begin{equation}
F(x; \alpha, \beta, \mu, \delta) = F(x; \gamma, 0, \mu, \delta) + \frac{\delta e^{\delta \gamma}}{2\pi} \sum_{k=0}^{\infty} \frac{(-1)^k (-\beta)^{k+1}}{(k+1)! 2^{k/2}} \int_0^{\infty} H_k\left(\frac{x-\mu}{\sqrt{2t}}\right) t^{k/2 - 1} e^{-\frac{\omega^2}{2t} - \frac{\gamma^2}{2}t} \mathop{dt}.
\end{equation}
We may apply some algebraic manipulations to obtain an expansion in the same form as \eqref{general_xmu_small_hermite_series}, obtaining
\begin{equation}\label{general_beta_small_hermite_series}
F(x; \alpha, \beta, \mu, \delta) = F(x; \gamma, 0, \mu, \delta) -\frac{\beta \delta e^{\delta \gamma}}{\pi}\sum_{k=0}^{\infty} \frac{(\beta(x-\mu))^k}{k+1}B_k,
\end{equation}
\begin{equation}
B_k = \sum_{j=0}^{\lfloor k/2 \rfloor} \frac{(-1)^j}{j!(k - 2j)!} \left(\frac{\omega}{2\gamma (x-\mu)^2}\right)^j K_{j}(\gamma \omega),
\end{equation}
where we can observe the similarity of the terms in both Hermite-type expansions.

\subsubsection{Expansion $\delta \to \infty$}
Let us start from the integral representation in \eqref{integral_phi} replacing the term $\Phi\left(\frac{x-(\mu + \beta t)}{\sqrt{t}}\right)$ by the asymptotic expansion for $t\to \infty$ in \eqref{phi_expansion_at_inf} in terms of the upper incomplete gamma function. Taking $a = x -\mu$ and $b = -\beta$, we obtain
\begin{equation}
F(x; \alpha, \beta, \mu, \delta) = \frac{\delta e^{\delta \gamma}}{2\pi} \sum_{k=0}^{\infty}\frac{(-1)^{k+1}}{2^k k!} \frac{\Gamma(2k+1, -(x-\mu)\beta)}{(-\beta)^{2k+1}} \int_0^{\infty} t^{-k-2} e^{-\frac{\delta^2}{2t} - \frac{\gamma^2 + \beta^2}{2}t} \mathop{dt}.
\end{equation}
The integral is expressible in terms of modified Bessel function
\begin{equation}
\int_0^{\infty} t^{-k-2} e^{-\frac{\delta^2}{2t} - \frac{\gamma^2 + \beta^2}{2}t} \mathop{dt} = 2 \left(\frac{\alpha}{\delta}\right)^{k + 1} K_{k+1}(\alpha \delta),
\end{equation}
where we use the fact that $\gamma^2+ \beta^2 = \alpha^2$. After rearranging terms, we have for $\beta > 0$
\begin{align}\label{general_asymptotic_delta}
F(x; \alpha, \beta, \mu, \delta) &= \frac{\alpha e^{\delta \gamma}}{\pi \beta}\sum_{k=0}^{\infty} \frac{(-1)^k}{k!} \Gamma(2k + 1, -(x-\mu)\beta)\left(\frac{\alpha}{2\beta^2 \delta}\right)^k K_{k+1}(\alpha \delta)\\
 &=\frac{\alpha e^{\delta \gamma}}{\pi \beta}\sum_{k=0}^{\infty} Q(2k + 1, -(x-\mu)\beta)\frac{\Gamma(2k+1)}{\Gamma(k+1)}\left(-\frac{\alpha}{2\beta^2 \delta}\right)^k K_{k+1}(\alpha \delta)\\
 &=\frac{\alpha e^{\delta \gamma}}{\pi^{3/2} \beta}\sum_{k=0}^{\infty} Q(2k + 1, -(x-\mu)\beta) \Gamma(k + 1/2)\left(-\frac{2\alpha}{\beta^2 \delta}\right)^k K_{k+1}(\alpha \delta),
\end{align}
and $1 + F(x; \alpha, \beta, \mu, \delta)$ for $\beta < 0$. We can rewrite the asymptotic series in terms of the regularized incomplete gamma function $Q(a, x) = \Gamma(a, x)/\Gamma(a)$, to avoid ratios with large numerator and denominator. In addition, the evaluation of the upper incomplete gamma function , $\Gamma(a, z)$, and its regularized version via recurrence is numerically stable for positive parameters $a$ and $z$,
\begin{equation}
\Gamma(a + 1, z) = a \Gamma(a, z) + z^a e^{-z}.
\end{equation}

Furthermore, the special case $x=\mu$ simplifies the asymptotic expansion considerably since $\Gamma(a, 0) = 1$ for $a > 0$, thus obtaining for $\beta > 0$
\begin{equation}\label{expansion_x_eq_mu_large_delta}
F(\mu; \alpha, \beta, \mu, \delta) = \frac{\alpha e^{\delta \gamma}}{\pi^{3/2} \beta}\sum_{k=0}^{\infty}\Gamma(k + 1/2)\left(-\frac{2\alpha}{\beta^2 \delta}\right)^k K_{k+1}(\alpha \delta).
\end{equation}

\subsubsection{Expansion $|x-\mu| \to \infty$}

In a similar manner, using instead the expansion in \eqref{phi_expansion_incgamma_t_small}, we obtain for $x-\mu < 0$ the asymptotic expansion
\begin{equation}\label{general_asymptotic_xmu}
F(x; \alpha, \beta, \mu, \delta) = -\frac{\delta e^{\delta\gamma}}{\pi (x-\mu)}\sum_{k=0}^{\infty}\frac{(-1)^k}{k!}\Gamma(2k + 1, -(x-\mu)\beta) \left(\frac{\omega}{2\gamma (x-\mu)^2}\right) K_k(\gamma \omega),
\end{equation}
and use $1 + F(x; \alpha, \beta, \mu, \delta)$ for $x-\mu > 0$. The same comments about the computation of the series using the recurrence of the upper incomplete gamma and modified Bessel function apply.

\subsubsection{Uniform asymptotic $\gamma \to \infty, \gamma \sim \delta$ and $|x-\mu| \gg 0$}
For large $\gamma$, we can employ the uniform asymptotic described in Section \ref{uniform_expansion_alpha_large}. The only difference is the calculation of the coefficients $c_k(r)$ using the recurrence in \eqref{phi_expansion_at_u} with $a = x-\mu$ and $b = -\beta$. The resulting uniform asymptotic expansion reads
\begin{equation}
F(x; \alpha, \beta, \mu, \delta) = \frac{\gamma \delta e^{\delta \gamma}}{2\sqrt{\pi}} \sum_{k=0}^{\infty} \frac{2^k c_k\left(\frac{\delta}{\gamma}\right) Q_k\left(\frac{\gamma\delta}{2}\right)}{\gamma^{2k}}, \quad \gamma \to \infty.
\end{equation}

Similar to the comment in Section \ref{uniform_expansion_alpha_large}, the series expansion \eqref{general_beta_small_hermite_series} for $\beta \to 0$ can also be interpreted as an asymptotic for $\gamma \to \infty$, but only admits small values of $|x-\mu|$.

\subsection{Numerical integration}\label{section_numerical_integration}

For cases not handled by the described expansions, we have no other alternative than resorting to numerical integration. However, although numerical integration is an excellent backup method when series expansions are not appropriate, there is an important efficiency aspect to be considered since it might be orders of magnitude slower. 

\subsubsection{Integrand in terms of the error function}
First, we focus on the Laplace-type integral \eqref{integral_phi}, whose integrand involves the complementary error function.
As a preliminary step, we truncate the improper integral \eqref{integral_phi} at some point $N > 0$, such that
\begin{equation}\label{truncated_integral}
I_N = \int_0^N \Phi\left(\frac{x - (\mu +\beta t)}{\sqrt{t}}\right) t^{-3/2} e^{-\frac{\delta^2}{2t} - \frac{\gamma^2}{2}t} \mathop{dt} + \int_N^{\infty} \Phi\left(\frac{x - (\mu +\beta t)}{\sqrt{t}}\right) t^{-3/2} e^{-\frac{\delta^2}{2t} - \frac{\gamma^2}{2}t} \mathop{dt},
\end{equation}
and $F(x; \alpha, \beta, \mu, \delta) = C I_N$, where $C = \frac{\delta e^{\delta \gamma}}{\sqrt{2\pi}}$. The truncation error can be bounded by
\begin{equation}
\int_N^{\infty} \Phi\left(\frac{x - (\mu +\beta t)}{\sqrt{t}}\right) t^{-3/2} e^{-\frac{\delta^2}{2t} - \frac{\gamma^2}{2}t} \mathop{dt} \le \frac{e^{-\frac{\delta^2}{2N}}}{N^{3/2}}\int_N^{\infty} e^{- \frac{\gamma^2}{2}t} \mathop{dt} \le \frac{2 e^{-\frac{\delta^2}{2N} - \frac{\gamma^2}{2}N}}{N^{3/2} \gamma^2},
\end{equation}
and we can select $N$ for a desired absolute tolerance $\epsilon$ solving numerically the equation
\begin{equation}
\frac{2 e^{-\frac{\delta^2}{2N} - \frac{\gamma^2}{2}N}}{N^{3/2} \gamma^2} = \frac{\epsilon}{C}.
\end{equation}
Moreover, a slightly lesser sharper bound allows a closed-form solution of the above equation in terms of the principal branch of the Lambert $W$ function, $W_0(z)$
\begin{equation}\label{N_equation}
\frac{2 e^{- \frac{\gamma^2}{2}N}}{N^{3/2} \gamma^2} = \frac{\epsilon}{C} \longrightarrow N = \frac{3}{\gamma^2}W_0\left(\frac{\gamma^2}{3u}\right), \quad u = \left(\frac{\gamma^2 \epsilon}{2 C}\right)^{2/3}.
\end{equation}
To reduce the cost of evaluating $N$, we can accurately approximate $W_0(z)$ for $z > 1$ using the upper bound derived in \cite{Hoorfar2008}
\begin{equation}\label{lambertW0_upper_bound}
W_0(z) \le \log(z)^{\tfrac{\log(z)}{1 + \log(z)}},
\end{equation}
implying that we can safely use this approximation when the following inequality is satisfied (which should occur for any reasonable set of parameters)
\begin{equation}
\delta \gamma e^{\delta\gamma} > \sqrt{\frac{27 \pi}{2}} \epsilon.
\end{equation}
Note that, from an implementation perspective, the use of the upper bound  \eqref{lambertW0_upper_bound} to estimate $N$ comes in handy to avoid overflow/underflow issues for large parameters by direct use of logarithmic properties. 
To accurately estimate $N$ to achieve a relative tolerance, we need an estimate of the order of magnitude of $I$. The integrand can be written as $e^{g(t)}$, where
\begin{equation}
g(t) = -\frac{\delta^2}{2t} - \frac{\gamma^2}{2}t - \frac{3}{2}\log(t) + \log\left(\Phi\left(\frac{x - (\mu +\beta t)}{\sqrt{t}}\right)\right),
\end{equation}
and its derivative with respect to $t$
\begin{equation}\label{saddle_point_equation}
g'(t) = \frac{\delta^2}{2t^2} -\frac{\gamma^2}{2} -\frac{3}{2t} -\frac{1}{2}\left(\frac{x-\mu}{t^{3/2}} + \frac{\beta}{\sqrt{t}} \right)\frac{\phi\left(\frac{x - (\mu +\beta t)}{\sqrt{t}}\right)}{\Phi\left(\frac{x - (\mu +\beta t)}{\sqrt{t}}\right)}.
\end{equation}
The saddle point $t_0$ and maximum contribution $e^{g(t_0)}$ of the integrand is obtained as the solution of the equation $g'(t) = 0$. Thus, $N$ for relative tolerance can be estimated after replacing $\epsilon$ with $\epsilon e^{g(t_0)}$ in \eqref{N_equation}. For the case where $\gamma$ and $\delta$ are both large and $\beta$ and $x-\mu$ are fixed, the last term in $g'(t)$ can be neglected, obtaining the quadratic equation
\begin{equation}\label{saddle_point_1}
g'(t) \approx \frac{\delta^2}{2t^2} -\frac{\gamma^2}{2} -\frac{3}{2t}, \quad t_0 = \frac{-\frac{3}{2} + \sqrt{\frac{9}{4} + (\gamma \delta)^2}}{\gamma^2},
\end{equation}
taking the positive internal saddle point $t_0$. The case where $\gamma$ and $\delta$ are small requires further analysis. If $x-\mu > 0$, $t_0$ in \eqref{saddle_point_1} is still valid. Contrarily, if $x -\mu < 0$, the ratio of the cumulative distribution function and the density function can be approximated via
\begin{equation*}
\frac{\phi\left(\frac{x - (\mu+ \beta t)}{\sqrt{t}}\right)}{\Phi\left(\frac{x - (\mu + \beta t)}{\sqrt{t}}\right)} \approx -\frac{x - (\mu + \beta t)}{\sqrt{t}},
\end{equation*}
then, replacing in \eqref{saddle_point_equation}, we have another quadratic equation
\begin{equation}\label{saddle_point_2}
g'(t) \approx \frac{\delta^2 + (x-\mu)^2}{2t^2} -\frac{\alpha^2}{2} -\frac{3}{2t}, \quad t_0 = \frac{-\frac{3}{2} + \sqrt{\frac{9}{4} + \alpha^2 \left((x-\mu)^2 + \delta^2\right)}}{\alpha^2}.
\end{equation}
The saddle point estimate $t_0$ computed by \eqref{saddle_point_1} or \eqref{saddle_point_2} is used as a starting point for the root-finding method. In particular, we find that the Newton's method setting an absolute error $10^{-4}$ only requires a few iterations (typically less than 5) to refine the initial estimate. After estimating $N$ using the calculated magnitude with the saddle point $t_0$, $\epsilon e^{g(t_0)}$, the truncated integral \eqref{truncated_integral} can be computed using standard numerical integration methods such as double exponential (tanh-sinh) quadrature or Gaussian quadrature (Gauss-Legendre). Technical details are discussed in Section \ref{algorithmic_numerical_integration}.

\subsubsection{Integrand in terms of elementary functions}
Most of the integral representations presented in Section \ref{properties_cdf} include special functions in their integrands (e.g., $K_1(x)$ or $\erfc(x)$), which necessarily increases the computation cost of their evaluation. Therefore, for computational efficiency, it is generally preferred to evaluate integrals representable in terms of elementary functions. A suitable integral representation satisfying this requirement was given in Equation \eqref{integral_sine_transform}. Following the steps previously described, the integral \eqref{integral_sine_transform} can be approximated by the truncated integral 
\begin{equation}\label{truncated_integral_sine_transform}
I_N = \int_0^N \frac{t e^{-(x-\mu)\left(\sqrt{t^2 + \alpha^2} - \beta\right)}}{\sqrt{t^2 + \alpha^2}\left(\sqrt{t^2 + \alpha^2} - \beta\right)}\sin(\delta t)\mathop{dt} + \int_N^{\infty} \frac{t e^{-(x-\mu)\left(\sqrt{t^2 + \alpha^2} - \beta\right)}}{\sqrt{t^2 + \alpha^2}\left(\sqrt{t^2 + \alpha^2} - \beta\right)}\sin(\delta t)\mathop{dt},
\end{equation}
and $F(x; \alpha, \beta, \mu, \delta) = 1 - C I_N$, where $C = \frac{e^{\delta \gamma}}{\pi}$. Consider the tail given by
\begin{equation}
T_N = \int_N^{\infty} \frac{t e^{-(x-\mu)\left(\sqrt{t^2 + \alpha^2} - \beta\right)}}{\sqrt{t^2 + \alpha^2}\left(\sqrt{t^2 + \alpha^2} - \beta\right)}\sin(\delta t)\mathop{dt}.
\end{equation}
An upper bound for the tail holds:
\begin{align*}
|T_N| &\le \int_N^{\infty} \frac{t e^{-(x-\mu)\left(\sqrt{t^2 + \alpha^2} - \beta\right)}}{\sqrt{t^2 + \alpha^2}\left(\sqrt{t^2 + \alpha^2} - \beta\right)}\mathop{dt}\\
&\le \frac{e^{(x-\mu)\beta}}{\sqrt{N^2 + \alpha^2} - \beta}\int_N^{\infty} e^{-(x-\mu)\sqrt{t^2 + \alpha^2}} \mathop{dt}\\
&\le \frac{e^{(x-\mu)\beta}}{\sqrt{N^2 + \alpha^2} - \beta} \sqrt{N^2 + \alpha^2} K_1((x-\mu)\sqrt{N^2 + \alpha^2}),
\end{align*}
where we use the fact that $|\sin(\delta t)| \le 1$, and a bound for the latter integral expressible in closed form using \cite[\S 3.461]{gradshteyn2007}
\begin{equation*}
\int_N^{\infty} e^{-(x-\mu)\sqrt{t^2 + \alpha^2}} \mathop{dt} \le \int_0^{\infty} e^{-(x-\mu)\sqrt{t^2 + N^2 + \alpha^2}} \mathop{dt} = \sqrt{N^2 + \alpha^2} K_1((x-\mu)\sqrt{N^2 + \alpha^2}).
\end{equation*}

\begin{remark}The previous bound involving the modified Bessel function $K_1(z)$ can be simplified at the expense of obtaining a slightly less sharper bound. Using the monotonicity properties of $K_{\nu}(z)$ for $z > 0$, we have that $K_1(z) < K_{3/2}(z)$, with $K_{3/2}(z)$ being expressible via elementary functions \ref{besselk_half}
\begin{equation*}
\quad K_{\frac{3}{2}}(z) = \sqrt{\frac{\pi}{2}}\frac{e^{-z} (z+1)}{z^{3/2}}.
\end{equation*}
Hence, one can obtain
\begin{equation}
|T_N| \le  \sqrt{\frac{\pi}{2}}\frac{\sqrt{N^2 + \alpha^2}}{\sqrt{N^2 + \alpha^2} - \beta} \frac{(x-\mu)\sqrt{N^2 + \alpha^2} + 1}{\left((x-\mu) \sqrt{N^2 + \alpha^2}\right)^{3/2}} e^{-(x-\mu)\left(\sqrt{N^2 + \alpha^2} - \beta\right)}.
\end{equation}
\end{remark}
Although the derived bounds are sharp, the inversion to obtain $N$ is far from trivial. In contrast, a practical approximation, improving as $N$ increases $(N \gg \alpha > |\beta|)$ is given by
\begin{equation}
|T_N| \approx \sqrt{\frac{\pi}{2}} \frac{e^{-(x-\mu)N}}{(x-\mu) N}.
\end{equation}
To select $N$, we take the approximation and equate to a prescribed accuracy $\epsilon$. The solution admits a closed-form in terms of $W_0(z)$
\begin{equation}
\frac{e^{-(x-\mu)N}}{(x-\mu)N} = \frac{\epsilon}{C} \longrightarrow N = \frac{1}{x-\mu}W_0\left(\frac{C \sqrt{\frac{\pi}{2}}}{\epsilon}\right).
\end{equation}

It is crucial to mention that the integrand in \eqref{truncated_integral_sine_transform} becomes highly oscillatory as $\delta$ increases. Therefore, the direct computation in the presence of high oscillations leads to catastrophic cancellation, requiring a prohibited number of quadrature points for convergence. In the literature, there is a myriad of numerical quadrature methods for highly oscillatory integrals, for example \cite{Ooura1991}, and we encourage the interested reader to explore them.  Nevertheless,  in this work, we decided solely to investigate the application of the steepest descent method, which we found adequate for this case. We transform the integral with sine kernel into the complex form
\begin{equation}\label{sine_transform_complex_integral}
F(x; \alpha, \beta, \mu, \delta) = 1 - \frac{e^{\delta \gamma}}{\pi}\Im\left(\int_0^{\infty} \frac{t e^{-(x-\mu)\left(\sqrt{t^2 + \alpha^2} - \beta\right) + \delta t i}}{\sqrt{t^2 + \alpha^2}\left(\sqrt{t^2 + \alpha^2} - \beta\right)}\mathop{dt}\right), \quad x-\mu > 0,
\end{equation}
and apply the change of variable $t = i q / \delta$ which yields
\begin{equation}
F(x; \alpha, \beta, \mu, \delta) = 1 + \frac{e^{\delta \gamma + (x-\mu) \beta}}{\pi \delta^2} \Im\left(\int_{0}^{\infty} \frac{q e^{-(x-\mu)\sqrt{\alpha^2 - q^2/\delta^2} - q}}{\sqrt{\alpha^2 - q^2/\delta^2}\left(\sqrt{\alpha^2 - q^2/\delta^2} -\beta\right)} \mathop{dq}\right).
\end{equation}
The integrand has two poles located at $q = \delta \alpha$ and $q = \delta\sqrt{\alpha^2 - \beta^2} = \delta \gamma$, but observe that the imaginary part of the integral only differs from zero when $q > \delta \alpha$. Consequently, the interval of integration can be shifted to $(\delta \alpha, \infty)$ removing the pole ($\delta \alpha \ge \delta \gamma$) which gives
\begin{equation}\label{sine_transform_complex_integral_non_oscillatory}
F(x; \alpha, \beta, \mu, \delta) = 1 + \frac{e^{\delta \gamma + (x-\mu) \beta}}{\pi \delta^2} \Im\left(\int_{\delta\alpha}^{\infty} \frac{q e^{-(x-\mu)i\sqrt{q^2/\delta^2 - \alpha^2} - q}}{i\sqrt{q^2/\delta^2 - \alpha^2}\left(i\sqrt{q^2/\delta^2 - \alpha^2} -\beta\right)} \mathop{dq}\right),
\end{equation}
where the resulting integrand prevents the evaluation of complex square roots. The previously mentioned numerical integration methods are applicable to the non-oscillatory integral \eqref{sine_transform_complex_integral_non_oscillatory}, but also Gauss-Laguerre quadrature with weight function $q^r e^{-q}$, $r \in \{0, 1\}$.

%


\section{Algorithmic details and implementation}
\subsection{Evaluation of Bessel-type expansions}
\label{subsection_evaluation_bessel_expansions}

\subsubsection{Handling large parameters}
As observed, all the expansions introduced in Section \ref{section_methods_of_computation} involve the modified Bessel function. A well-known method to compute the modified Bessel function is based on the recurrence identity for consecutive neighbours \cite{Abramowitz1972}
\begin{equation}
K_{\nu + 1}(z) = K_{\nu - 1}(z) + \frac{2\nu}{z} K_{\nu}(z),
\end{equation}
given the initial values of the immediately previous two terms. However, for large order $\nu$ and large argument $z$, the recurrence is quickly affected by overflow/underflow issues. A common approach to circumvent this problem is the use of the exponentially scaled function (e.g., implemented in \cite{Cody1993}) defined as
\begin{equation}
K^*_{\nu}(z) = e^z K_{\nu}(z).
\end{equation}
An additional method to obtain a stable recursion entails the ratio of modified Bessel functions defined as
\begin{equation}
r_{\nu}(z) = \frac{K_{\nu+1}(z)}{K_{\nu}(z)},
\end{equation} 
which satisfy the recurrence identity \cite{Xue2018}
\begin{equation}
r_{\nu}(z) = \frac{1}{r_{\nu - 1}(z)} + 2\frac{\nu}{z}.
\end{equation}
Furthermore, another option to handle large parameters is the application of logarithmic transformations \cite{Xue2018}, even though we found that the forward recursion of the ratios combined with the use of the exponential scaling suffices to obtain a numerically stable implementation of the recursion.

Finally, the evaluation of a series expansion using partial sums recurrence can also alleviate instability issues. As an example, we consider the series \eqref{expansion_xmu_b0_positive}
\begin{equation}
F(x;\alpha, 0, \mu, \delta) = \frac{1}{2} + \frac{\delta e^{\delta \alpha}}{\pi}\frac{(x-\mu) \alpha}{\omega}S_K,
\end{equation}
where $S_K$ is the $k$-th partial sum. The first partial sums are
\begin{equation}
S_0 = 0, \quad S_1 = K_1(\alpha \omega), \quad S_2 = S_1 + \frac{K_2(\alpha \omega)z}{3},
\end{equation}
and for $k \ge 0$, the partial sums satisfy the recursion relation
\begin{equation}
S_{k+3} = \frac{-\alpha\omega z^2 S_k + z \left(-2 (2+k) (3 + 2k) + \alpha\omega\right) S_{k+1} + (3 + 2k) \left((5 + 2k) \alpha\omega + 2(2+k)z\right) S_{k+2}}{(3 + 2k) (5 + 2k) \alpha \omega},
\end{equation}
where
\begin{equation}
z = \frac{(x-\mu)^2 \alpha}{\omega}.
\end{equation}
%

\subsubsection{Performance comparison modified Bessel functions $K_0(x)$ and $K_1(x)$}
The computation of the two special cases, $K_0(x)$ and $K_1(x)$, of the modified Bessel function is required in several series expansions to apply the forward recurrence relation described in the previous section, so it is essential to guarantee a good performance. Since the C++17 standard, the function \texttt{std::cyl\_bessel\_k} was made available, but some recent performance experiments\footnote{\url{https://nag.com/insights/quant-essentials-normdist/}} for the function \texttt{std::erfc} encouraged us to perform a benchmarking exercise. In particular, we compare \texttt{std::cyl\_bessel\_k} with our own C++ implementation of the minimax rational approximation in SPECFUN \cite{Cody1993}, functions \texttt{CALCK0} and \texttt{CALCK1}, respectively.

The experiments have been performed on an AMD Ryzen 7 5825U running at 2.0 GHz using compilers \texttt{clang} 18.1.3 and \texttt{gcc} 13.3.0. with compiler optimization flag \texttt{-O3} on Linux. In this experiment, we evaluate $K_0(x)$ on $10^5$ evenly spaced points in the interval $x \in (0, 700)$. The timings in seconds show that the minimax rational approximation, evaluated on the described hardware and compilers, is approximately two orders of magnitude faster:
\begin{itemize}
\item \texttt{std::cyl\_bessel\_k}: $5.9 \cdot 10^{-3}$.
\item SPECFUN: $8.1 \cdot 10^{-5}$.
\end{itemize}
The same differences are obtained when evaluating $K_1(x)$.

\subsection{Numerical integration}\label{algorithmic_numerical_integration}
For the computation of the truncated half-line integral in \eqref{truncated_integral}, we choose the double-exponential quadrature method, also known as tanh-sinh quadrature, originally developed in \cite{Takahasi1973}. The double-exponential quadrature has risen in popularity in scientific computing, and compared to the classical Gaussian quadratures, computing the nodes and weights for integration is faster while providing a robust handling of endpoint singularities. Our C++ implementation is inspired by the Python library \texttt{tanh-sinh} \url{https://github.com/sigma-py/tanh-sinh} which carefully implements many of the optimizations described in \cite{Bailey2006TanhSinhHQ}. In particular, it includes an analysis of the determination of the optimal step size, and the number of quadrature points by level, which demands multiple evaluations of the Lambert $W$ function $W_{-1}(z)$. For that purpose, as an optimization, we replace its exact computation by the approximation in \cite{Barry2004}
\begin{equation}
W_{-1}(z) \approx \log(-z) - \frac{2}{a} \left(1 - \left(1 + a \sqrt{-\frac{1 + \log(-z)}{2}}\right)^{-1}\right), \quad	a = 0.3205,
\end{equation}
with maximum relative error of $3.5\cdot 10^{-3}$.

\subsection{Implementation}\label{subsection_implementation}
A reliable implementation of the cumulative distribution function for the normal inverse Gaussian distribution in double-precision arithmetic presents several difficulties. Consequently, it becomes necessary to find the right combination of methods to perform efficient and accurate computations on a wide range of the parameters' domain. To devise this combination (i.e., decide the method of choice in each parameter regime) one could formulate an optimization problem with two possible objective functions: minimization of terms (complexity) and maximization of robustness (accuracy), or a blended approach.

As discussed throughout this work, the achievable accuracy of each method can be effectively estimated by developing rigorous error bounds or, in the absence of error bounds, via linear search with a rough initial estimate. However, it is not always practical to obtain the optimal solution for each set of parameters, and a heuristic might suffice for the vast majority of cases. One can treat the problem of automatically choosing the most suitable method for each region as a classification problem where Machine Learning algorithms are suitable \cite{Simpson2016, Navas-Palencia2019}. More specifically, we can consider a multi-class problem, where each class corresponds to a method, or a multi-label classification problem since multiple methods might be selected for a given set of parameters.
Our approach consists of generating a dataset covering small and large regions of the parameters' domain, computing the cumulative distribution function via \texttt{mpmath} \cite{mpmath} using numerical quadrature, and training a decision tree for each method to decide the regions with absolute relative error below a prescribed tolerance of $5\cdot 10^{-13}$. As a refinement, we combine and simplify the model using another decision tree and posterior rounding of the boundaries. The result of the process are the algorithms 1-3. Some remarks:
\begin{itemize}
\item Numerical integration is used as a backup method whenever series expansions and asymptotic expansions are not suitable or, they fail to converge.
\item Expansion $|x-\mu| \to 0$ in terms of the lower incomplete gamma function in \eqref{general_xmu_incgamma_integral} was discarded because numerical instability prevents its implementation in double-precision arithmetic.
\item For asymptotic expansions, we slightly relaxed tolerance requirements to reduce the use of numerical integration and improve performance.
\end{itemize}

{\centering
\begin{minipage}{.8\linewidth}
  \begin{algorithm}[H]
  \small
  \caption{Algorithm for special case $\beta = 0$, $F(x; \alpha, 0, \mu, \delta)$}\label{alg:case_beta_eq_zero}
\begin{algorithmic}
\Require $x \in \mathbb{R}$, $\alpha > 0$, $\mu \in \mathbb{R}$, $\delta > 0$
\Ensure $F(x; \alpha, 0, \mu, \delta)$
\State $C_1$ = $|x-\mu| \le 5$ and $\alpha / \omega \le 0.25$ and $\delta/2 \ge |x-\mu| $
\State $C_2$ = $(x-\mu)^2 \le 1.25$ and $\alpha / \omega \le 1$
\If{($C_1$ or $C_2$) and $\delta \ge 1$}
    \State series expansion \eqref{expansion_xmu_b0_positive}
\ElsIf{$(x-\mu)^2 \le 2.5$ and $\alpha \ge 5$ and $\delta \ge 10$ and $\delta \alpha \ge 200$}
    \State uniform asymptotic expansion \eqref{uniform_expansion_a_large}
\ElsIf{$(x-\mu)^2 \ge 70$ and $\alpha / \omega \ge 1$}
    \State asymptotic expansion \eqref{asymptotic_expansion_xmu_b0}
\Else
	\State numerical integration \eqref{truncated_integral}
\EndIf
\end{algorithmic}
  \end{algorithm}
\end{minipage}
\par
}


{\centering
\begin{minipage}{.85\linewidth}
  \begin{algorithm}[H]
  \small
  \caption{Algorithm for special case $x=\mu$, $F(\mu; \alpha, \beta, \mu, \delta)$}\label{alg:case_x_eq_mu}
\begin{algorithmic}
\Require $x \in \mathbb{R}$, $\alpha > 0$, $0 \le |\beta| < \alpha$, $\delta > 0$
\Ensure $F(\mu; \alpha, \beta, \mu, \delta)$
\If{$\alpha \le 10$ and $\delta \le 10$ and $|\beta| \le 1.5$ and $|\beta| / \alpha \le 0.9$}
    \State series expansion \eqref{series_x=mu_2}
\ElsIf{$|\beta| / \alpha \ge 0.75$ and $\delta \alpha \ge 300$ and $\delta \ge 15$}
    \State uniform asymptotic expansion \eqref{expansion_x_eq_mu_large_delta}
\Else
	\State numerical integration \eqref{truncated_integral}
\EndIf
\end{algorithmic}
  \end{algorithm}
\end{minipage}
\par
}


{\centering
\begin{minipage}{.85\linewidth}
  \begin{algorithm}[H]
  \small
  \caption{Algorithm for $F(x; \alpha, \beta, \mu, \delta)$}\label{alg:case_general}
\begin{algorithmic}
\Require $x \in \mathbb{R}$, $\alpha > 0$, $0 \le |\beta| < \alpha$, $\mu \in \mathbb{R}$, $\delta > 0$
\Ensure $F(x; \alpha, \beta, \mu, \delta)$
\If{($|\beta| \le 1$ and $\gamma \ge 1.5$) or ($|\beta| \le 0.5$ and $\gamma \ge 0.75$)}
    \State series expansion \eqref{general_beta_small_hermite_series}
\ElsIf{$(x-\mu)^2 \le 2.25$ and $\delta \ge 2.5$}
    \State series expansion \eqref{general_xmu_small_hermite_series}
\ElsIf{$(x-\mu)^2 \le 3$ and $\delta \ge 1$ and $|\beta| \le 1.5$ and $\gamma \ge 0.75$}
	\State series expansion \eqref{general_expansion_xmu_small_bessel}
\ElsIf{$(x-\mu)^2 \le 20$ and $\alpha \ge 5$ and $|\beta| / \alpha \ge 0.5$ and $\delta \ge 15$}
	\State asymptotic expansion \eqref{general_asymptotic_delta}
\ElsIf{$(x-\mu)^2 \ge 100$ and $\alpha / \omega \ge 0.25$ and $\gamma \ge 10$ and $\delta \le 10$ and $\alpha / |\beta| \ge 5$}
	\State asymptotic expansion \eqref{general_asymptotic_xmu}
\Else
	\State numerical integration \eqref{truncated_integral}
\EndIf
\end{algorithmic}
  \end{algorithm}
\end{minipage}
\par
}
\vspace{5mm}
Finally, the resulting algorithm implemented in C++17 is freely available on GitHub: \url{https://github.com/guillermo-navas-palencia/normal-inverse-gaussian}.

\section{Numerical experiments}
The number of available open-source and commercial implementations of the normal inverse Gaussian distribution is limited. In the open-source space, this statistical distribution is not included in widely used numerical libraries such as Boost \cite{BoostLibrary}, GNU Scientific Library (GSL) \cite{GSL} or CERN ROOT \cite{ROOT} to mention a few. To the author's knowledge, the two exceptions are SciPy \texttt{stats.norminvgauss} \cite{scipy} and the R packages \texttt{GeneralizedHyperbolic}\footnote{\url{https://rdrr.io/rforge/GeneralizedHyperbolic/}} and \texttt{ghyp}\footnote{https://rdrr.io/cran/ghyp/}. The implementation of the cumulative distribution function in these libraries solely relies on the use of numerical integration, more specifically performing adaptive Gaussian quadrature using the Fortran library QUADPACK \cite{QUADPACK}. SciPy performs direct computation via adaptive Gaussian quadrature, whereas \texttt{GeneralizedHyperbolic} partitions the real line into multiple regions, and the numerical integration routine is used in each to integrate the density function.

We compare our implementation to the SciPy \texttt{stats.norminvgauss} since it seems the fastest and most robust of the aforementioned options. To test accuracy, we generate two sample sets, small and large, with $10^4$ instances for each special case and the general case. The small set, which mainly comprises small parameters, was generated to test the accuracy of the series expansions. On the other hand, the large set assesses the accuracy of the asymptotic expansions. For comparing accuracy, we take as a reference the value obtained by computing the truncated integral \eqref{truncated_integral} using \texttt{mpmath} with 100 digits of precision. A run is considered successful if the absolute relative error is below the threshold $5\cdot 10^{-13}$ ($\sim 200 \epsilon$).
Furthermore, we define as hard instances cases where \texttt{mpmath} does not converge and an increment of working precision to 300 digits is required.  
In terms of execution speed, we run the C++ compiled library from Python using \texttt{ctypes} to guarantee a fair comparison. Tests were run on an AMD Ryzen 7 5825U running at 2.0 GHz using the \texttt{clang} compiler with optimization flag \texttt{-O3}.

Following, we outline the main observations for each of the cases:

\paragraph{Case $\beta = 0$}
\begin{itemize}
\item Tables \ref{table_parameters_case_beta_eq_zero} and \ref{table_comparison_case_beta_eq_zero_summary} show the characteristics of the sample sets and the accuracy and performance summary. In terms of accuracy, the SciPy implementation has significantly higher mean absolute error for all the regions, and, therefore, a higher number of failures. In contrast, our implementation shows superior robustness, maintaining a mean error below $10^{-13}$. Regarding performance, our implementation is consistently around $10-20$ times faster than SciPy.
\item Tables \ref{table_methods_case_beta_eq_zero_small} and \ref{table_methods_case_beta_eq_zero_large} show that numerical integration is used in above 60\% of the cases. However, the results in Table \ref{table_methods_case_beta_eq_zero_large_hard} show that for large hard instances, generally corresponding to the case where $|x-\mu|$ is large, the asymptotic expansion \eqref{asymptotic_expansion_xmu_b0} is the preferred choice.
\end{itemize}

\paragraph{Case $x = \mu$}
\begin{itemize}
\item Tables \ref{table_parameters_case_x_eq_mu} and \ref{table_comparison_case_x_eq_mu_summary} show the characteristics of the sample sets and the accuracy and performance summary. The SciPy implementation shows more reliability in the small region, but its accuracy worsens in the large region, especially for hard instances, where just 1.77\% of these are computed successfully.
\item In the small region, contrasting to the case $\beta = 0$, Table \ref{table_methods_case_general_small} shows that the series expansion \eqref{series_x=mu_2} can be safely chosen in more than $80\%$ of the instances hence the speedup compared to SciPy increases, being $\sim 60$ times faster.
\item Tables \ref{table_methods_case_x_eq_mu_large} and \ref{table_methods_case_x_eq_mu_large_hard} shares the same conclusions than the case $\beta = 0$, numerical integration dominates in the large region, and for hard instances (large parameters $\alpha$ and/or $\delta$), the asymptotic expansion \eqref{expansion_x_eq_mu_large_delta} is effective.
\end{itemize}

\paragraph{General case}
\begin{itemize}
\item Tables \ref{table_parameters_case_general} and \ref{table_comparison_case_general_sumamry} show the characteristics of the sample sets and the accuracy and performance summary. The previous observations largely apply, except for the fact, that the accuracy for large (hard) instances is remarkably lower. The symbol $*$ indicates that \texttt{mpmath} struggles to converge with 300 digits after various attempts varying the integral upper bound $N$, returning systematically a value of zero. Nevertheless, the proposed algorithms seems less reliable for large values of $|x-\mu|$ for the general case.
\item Table \ref{table_methods_case_general_small} and  \ref{table_methods_case_general_large} exhibit that although the use of numerical integration is also dominant, the various expansions achieve similar accuracy. Nonetheless, table \ref{table_methods_case_general_large_hard} illustrates that for some large hard instances, some expansions suffer to obtain more than 11 correct digits, forcing a switch to numerical integration to improve reliability in these cases.
\item Table \ref{table_comparison_case_general_integration} compares the SciPy adaptive quadrature implementation with our double-exponential quadrature implementation, the latter demonstrating superior accuracy and performance, being $5-20$ times faster. Nevertheless, we emphasize that, although an implementation uniquely based on numerical integration is possible, the use of various expansions can reliably complement with benefits in terms of CPU times (as previously stated, numerical integration tends to be up to 100 times slower than an efficient implementation of expansions exploiting recurrence relations).
\end{itemize}


\section{Conclusions}
In the work presented, we developed and implemented an algorithm for fast and robust computation of the normal inverse Gaussian cumulative distribution function.
With a thorough treatment, we showed the effectiveness of combining multiple series and asymptotic expansions with numerical integration to obtain significant
acceleration compared to current state-of-the-art open-source implementations while keeping accuracy to near machine precision.

Several potential improvements remain to be explored in future work. 
First, the parameter regime of the convergent series expansions could be widened if computed increasing the working precision to long double (80-bit) or quadruple precision (128 bit), taking into account that quadruple precision is not natively supported in hardware, and necessarily it must be simulated\footnote{For example, using libquadmath, the GCC Quad-Precision Math Library.}, which in turn produces a code at least 10x slower. An alternative is using double-double precision (approximately 30 digits), an approach implemented in libraries such as QD library \cite{Hida2001}, which has been successfully implemented for the computation of special functions, especially near the zeros \cite{Navas-Palencia2018a}. 

Second, it would also be interesting to evaluate the use of fixed-order Gauss-Legendre quadrature instead of the double-exponential quadrature, studying the appropriate number of quadrature points for different regions of the parameters' domain. In addition, it might be worth investigating the implementation of the integral expressible in terms of elementary function for small $\delta$ values. 


Lastly, the techniques introduced throughout this work for developing many of the expansions can, of course, be seamlessly extendible to develop efficient algorithms for the computation of the generalized hyperbolic distribution and other special cases, such as the variance gamma distribution.

\section{Acknowledgements}
The author would like to acknowledge Amparo Gil and Javier Segura for providing useful advice and interesting discussions while preparing this manuscript.

\appendix 

\section{Benchmark}

\subsection{Case $\beta = 0$}
\begin{table}[H]
\centering
\scalebox{0.9}
{
	\begin{tabular}{c|ccccc}
	\hline
	Region & $x$ & $\alpha$ & $\mu$ & $\delta$\\
	\hline	
	small & (-5, 5) & (0.001, 5) & (-5, 5) & (0.001, 5)\\
	large & (-10, 10) & (0.001, 50) & (-10, 10) & (0.001, 50)\\
	\hline	
	\end{tabular}
}
\caption{Case $\beta = 0$: Parameter range for small and large region.}
	\label{table_parameters_case_beta_eq_zero}
\end{table}

\begin{table}[H]
	\centering
	\scalebox{0.9}{
\begin{tabular}{ccccc|cc}
\hline
Library & Region & Success & Median & Mean &  Time (s) & Time ($\mu s$)\\
\hline
SciPy & Small & 3952 / 5000 (79.04\%) &  $9.44\cdot 10^{-16}$ & $9.90 \cdot 10^{-4}$ & 2.45 & 490\\
Paper & Small & 4988 / 5000 (99.76\%) & $2.66\cdot 10^{-15}$ & $1.53\cdot 10^{-14}$ & 0.11 & 21\\
	\hline\
SciPy & Large & 3549 / 4868 (72.90\%) & $1.49\cdot 10^{-14}$ & $5.63\cdot 10^{-3}$ & 2.60 & 535\\
Paper & Large & 4868 / 4868 (100\%) & $1.11\cdot 10^{-15}$ & $1.27\cdot 10^{-14}$ & 0.22 & 45\\
	\hline\
SciPy & Large (hard) & 25 / 132 (18.94\%) & $1.44\cdot 10^{-4}$ & $6.45\cdot 10^{-3}$ & 0.028 & 214\\
Paper & Large (hard) & 127 / 132 (96.21\%) & $2.46\cdot 10^{-14}$ & $8.84\cdot 10^{-14}$ & 0.002 & 12\\
	\hline
	\end{tabular}}
	\caption{Summary of the accuracy and performance comparison for the case $\beta = 0$.}
	\label{table_comparison_case_beta_eq_zero_summary}
\end{table}

\begin{table}[H]
	\centering
	\scalebox{0.9}{
\begin{tabular}{ccccccc}
\hline
Method & Count & min & 25\% & Median & 75\% & max\\
\hline
Integration \eqref{truncated_integral} & 4121 (82.42\%) & 0 & $1.22\cdot 10^{-15}$ & $4.15\cdot 10^{-15}$ & $1.44\cdot 10^{-14}$ & $8.40\cdot 10^{-13}$\\
Series \eqref{expansion_xmu_b0_positive} & 879 (17.58\%) & 0 & 0 & $2.22\cdot 10^{-16}$ & $1.11\cdot 10^{-15}$ & $6.58\cdot 10^{-14}$\\
	\hline
	\end{tabular}}
	\caption{Precision metrics of the numerical methods used for computing in the small region. The errors are the absolute relative errors compared to the reference solutions obtained using mpmath. Percentiles: 25, 50 (median), 75.}
	\label{table_methods_case_beta_eq_zero_small}
\end{table}

\begin{table}[H]
	\centering
	\scalebox{0.9}{
\begin{tabular}{ccccccc}
\hline
Method & Count & min & 25\% & Median & 75\% & max\\
\hline
Integration \eqref{truncated_integral} & 3137 (64.44\%) & 0 & $3.33\cdot 10^{-16}$ & $8.88\cdot 10^{-16}$ & $4.11\cdot 10^{-15}$ & $4.84\cdot 10^{-13}$\\
Series \eqref{expansion_xmu_b0_positive} & 810 (16.64\%) & 0 & $3.33\cdot 10^{-16}$ & $1.33\cdot 10^{-15}$ & $8.41\cdot 10^{-15}$ & $4.28\cdot 10^{-13}$\\
Asymptotic \eqref{asymptotic_expansion_xmu_b0} & 620 (12.74\%) & 0 & 0 & 0 & $2.54\cdot 10^{-14}$ & $2.31\cdot 10^{-13}$\\
Asymptotic \eqref{uniform_expansion_a_large} & 301 (6.18\%) & $2.22\cdot 10^{-16}$ & $4.44\cdot 10^{-15}$ & $1.24\cdot 10^{-14}$ & $3.46\cdot 10^{-14}$ & $2.26\cdot 10^{-13}$\\
	\hline
	\end{tabular}}
	\caption{Precision metrics of the numerical methods used for computing in the large region. The errors are the absolute relative errors compared to the reference solutions obtained using mpmath. Percentiles: 25, 50 (median), 75.}
	\label{table_methods_case_beta_eq_zero_large}
\end{table}

\begin{table}[H]
	\centering
	\scalebox{0.9}{
\begin{tabular}{ccccccc}
\hline
Method & Count & min & 25\% & Median & 75\% & max\\
\hline
Integration \eqref{truncated_integral} & 22 (16.67\%) & 0 & $6.94\cdot 10^{-16}$ & $3.66\cdot 10^{-15}$ & $1.76\cdot 10^{-15}$ & $5.98\cdot 10^{-14}$\\
Series \eqref{expansion_xmu_b0_positive} & 7 (5.30\%) & $4.44\cdot 10^{-16}$ & $6.11\cdot 10^{-16}$ & $2.00\cdot 10^{-15}$ & $5.39\cdot 10^{-13}$ & $3.22\cdot 10^{-12}$\\
Asymptotic \eqref{asymptotic_expansion_xmu_b0} & 100 (75.76\%) & $8.88\cdot 10^{-16}$ & $1.35\cdot 10^{-14}$ & $3.00\cdot 10^{-14}$ & $5.57\cdot 10^{-14}$ & $1.30\cdot 10^{-13}$\\
Asymptotic \eqref{uniform_expansion_a_large} & 3 (2.27\%) & $5.76\cdot 10^{-13}$ & $8.84\cdot 10^{-13}$ & $1.19\cdot 10^{-12}$ & $1.42\cdot 10^{-12}$ & $1.64\cdot 10^{-12}$\\
	\hline
	\end{tabular}}
	\caption{Precision metrics of the numerical methods used for computing in the large (hard) region. The errors are the absolute relative errors compared to the reference solutions obtained using mpmath. Percentiles: 25, 50 (median), 75.}
	\label{table_methods_case_beta_eq_zero_large_hard}
\end{table}

\subsection{Case $x=\mu$}

\begin{table}[H]
\centering
\scalebox{0.9}
{
	\begin{tabular}{c|cccc}
	\hline
	Region & $\alpha$ & $\beta$ & $\delta$\\
	\hline	
	small & (0.001, 5) & (-5, 5) & (0.001, 5)\\
	large & (0.001, 50) & (-50, 50) & (0.001, 50)\\
	\hline	
	\end{tabular}
}
\caption{Case $x = \mu$: Parameter range for small and large region.}
	\label{table_parameters_case_x_eq_mu}
\end{table}

\begin{table}[H]
	\centering
	\scalebox{0.9}{
\begin{tabular}{ccccc|cc}
\hline
Library & Region & Success & Median & Mean & Time (s) & Time ($\mu s$)\\
\hline
SciPy & Small & 4838 / 5000 (96.76\%) &  $8.88\cdot 10^{-16}$ & $8.40 \cdot 10^{-11}$ & 2.02 & 405\\
Paper & Small & 5000 / 5000 (100\%) & $2.22\cdot 10^{-16}$ & $9.60\cdot 10^{-16}$ & 0.03 & 7\\
	\hline\
SciPy & Large & 3084 / 4548 (67.81\%) & $1.96\cdot 10^{-15}$ & $1.34\cdot 10^{-12}$ & 2.58 & 516\\
Paper & Large & 4548 / 4548 (100\%) & $6.66\cdot 10^{-16}$ & $8.25\cdot 10^{-15}$ & 0.27 & 53\\
	\hline\
SciPy & Large (hard) & 8 / 452 (1.77\%) & $1.26\cdot 10^{-1}$ & $1.65\cdot 10^{-1}$ & 0.104 & 229\\
Paper & Large (hard) & 425 / 452 (94.03\%) & $3.86\cdot 10^{-14}$ & $2.47\cdot 10^{-12}$ & 0.024 & 53\\
	\hline
	\end{tabular}}
	\caption{Summary of the accuracy and performance comparison for the case $x=\mu$.}
	\label{table_comparison_case_x_eq_mu_summary}
\end{table}

\begin{table}[H]
	\centering
	\scalebox{0.9}{
\begin{tabular}{ccccccc}
\hline
Method & Count & min & 25\% & Median & 75\% & max\\
\hline
Integration \eqref{truncated_integral} & 949 (18.48\%) & 0 & 0 & $2.22\cdot 10^{-16}$ & $4.44\cdot 10^{-16}$ & $6.00\cdot 10^{-15}$\\
Series \eqref{series_x=mu_2} & 4051 (81.02\%) & 0 & 0 & $2.22\cdot 10^{-16}$ & $5.55\cdot 10^{-16}$ & $2.48\cdot 10^{-13}$\\
	\hline
	\end{tabular}}
	\caption{Precision metrics of the numerical methods used for computing in the small region. The errors are the absolute relative errors compared to the reference solutions obtained using mpmath. Percentiles: 25, 50 (median), 75.}
	\label{table_methods_case_x_eq_mu_small}
\end{table}

\begin{table}[H]
	\centering
	\scalebox{0.9}{
\begin{tabular}{ccccccc}
\hline
Method & Count & min & 25\% & Median & 75\% & max\\
\hline
Integration \eqref{truncated_integral} & 4289 (94.31\%) & 0 & $2.22\cdot 10^{-16}$ & $5.55\cdot 10^{-16}$ & $3.77\cdot 10^{-15}$ & $4.81\cdot 10^{-13}$\\
Series \eqref{series_x=mu_2} & 126 (2.77\%) & 0 & $2.22\cdot 10^{-16}$ & $4.44\cdot 10^{-16}$ & $1.67\cdot 10^{-15}$ & $2.60\cdot 10^{-13}$\\
Asymptotic \eqref{expansion_x_eq_mu_large_delta} & 133 (2.92\%) & $5.55\cdot 10^{-16}$ & $1.70\cdot 10^{-14}$ & $2.70\cdot 10^{-14}$ & $4.30\cdot 10^{-14}$ & $1.15\cdot 10^{-13}$\\
	\hline
	\end{tabular}}
	\caption{Precision metrics of the numerical methods used for computing in the large region. The errors are the absolute relative errors compared to the reference solutions obtained using mpmath. Percentiles: 25, 50 (median), 75.}
	\label{table_methods_case_x_eq_mu_large}
\end{table}

\begin{table}[H]
	\centering
	\scalebox{0.9}{
\begin{tabular}{ccccccc}
\hline
Method & Count & min & 25\% & Median & 75\% & max\\
\hline
Integration \eqref{truncated_integral} & 204 (45.13\%) & $2.22\cdot 10^{-16}$ & $1.73\cdot 10^{-14}$ & $3.22\cdot 10^{-14}$ & $4.73\cdot 10^{-14}$ & $1.19\cdot 10^{-13}$\\
Series \eqref{series_x=mu_2} & 1 (0.22\%) & $7.76\cdot 10^{-13}$ & $7.76\cdot 10^{-13}$ & $7.76\cdot 10^{-13}$ & $7.76\cdot 10^{-13}$ & $7.76\cdot 10^{-13}$\\
Asymptotic \eqref{expansion_x_eq_mu_large_delta} & 247 (54.65\%) & 0 & $3.09\cdot 10^{-14}$ & $5.43\cdot 10^{-14}$ & $9.60\cdot 10^{-14}$ & $-$\\
	\hline
	\end{tabular}}
	\caption{Precision metrics of the numerical methods used for computing in the large (hard) region. The errors are the absolute relative errors compared to the reference solutions obtained using mpmath. Percentiles: 25, 50 (median), 75.}
	\label{table_methods_case_x_eq_mu_large_hard}
\end{table}

\subsection{General case}
\begin{table}[H]
\centering
\scalebox{0.9}
{
	\begin{tabular}{c|cccccc}
	\hline
	Region & $x$ & $\alpha$ & $\beta$ & $\mu$ & $\delta$\\
	\hline	
	small & (-5, 5) & (0.001, 5) & (-5, 5) & (-5, 5) & (0.001, 5)\\
	large & (-10, 10) & (0.001, 50) & (-50, 50) & (-10, 10) & (0.001, 50)\\
	\hline	
	\end{tabular}
}
\caption{General case: Parameter range for small and large region.}
	\label{table_parameters_case_general}
\end{table}

\begin{table}[H]
	\centering
	\scalebox{0.9}{
\begin{tabular}{ccccc|cc}
\hline
Library & Region & Success & Median & Mean & Time (s) & Time ($\mu s$)\\
\hline
SciPy & Small & 3983 / 5000 (79.66\%) &  $1.78\cdot 10^{-15}$ & $1.89 \cdot 10^{-5}$ & 2.53 & 506\\
Paper & Small & 4939 / 5000 (98.78\%) & $3.89\cdot 10^{-15}$ & $1.01\cdot 10^{-13}$ & 0.12 & 24\\
	\hline\
SciPy & Large & 2949 / 4296 (68.65\%) & $4.49\cdot 10^{-14}$ & $7.91\cdot 10^{-3}$ & 2.79 & 559\\
Paper & Large & 4296 / 4296 (100\%) & $6.66\cdot 10^{-16}$ & $1.09\cdot 10^{-14}$ & 0.44 &88\\
	\hline\
SciPy & Large (hard) & 70 / 704 (9.94\%) & $0.16\cdot 10^{-1}$ & $*$ & 0.16 & 223\\
Paper & Large (hard) & 409 / 704 (58.10\%) & $8.07\cdot 10^{-14}$ & $*$ & 0.07 & 95\\
	\hline
	\end{tabular}}
	\caption{Summary of the accuracy and performance comparison for the general case. Symbol $*$ indicates numerical issues were encountered.}
	\label{table_comparison_case_general_sumamry}
\end{table}

\begin{table}[H]
	\centering
	\scalebox{0.9}{
\begin{tabular}{ccccccc}
\hline
Method & Count & min & 25\% & Median & 75\% & max\\
\hline
Integration \eqref{truncated_integral} & 3585 (71.70\%) & 0 & $6.66\cdot 10^{-16}$ & $5.00\cdot 10^{-15}$ & $1.78\cdot 10^{-14}$ & $7.56\cdot 10^{-11}$\\
Series \eqref{general_expansion_xmu_small_bessel} & 212 (4.24\%) & 0 & $4.44\cdot 10^{-16}$ & $1.61\cdot 10^{-15}$ & $1.14\cdot 10^{-14}$ & $2.68\cdot 10^{-12}$\\
Series \eqref{general_xmu_small_hermite_series} & 606 (12.12\%) & 0 & $3.33\cdot 10^{-16}$ & $8.88\cdot 10^{-16}$ & $5.30\cdot 10^{-15}$ & $3.08\cdot 10^{-11}$\\
Series \eqref{general_beta_small_hermite_series} & 597 (11.94\%) & 0 & $8.88\cdot 10^{-16}$ & $4.22\cdot 10^{-15}$ & $1.47\cdot 10^{-14}$ & $1.70\cdot 10^{-12}$\\
	\hline
	\end{tabular}}
	\caption{Precision metrics of the numerical methods used for computing in the small region. The errors are the absolute relative errors compared to the reference solutions obtained using mpmath. Percentiles: 25, 50 (median), 75.}
	\label{table_methods_case_general_small}
\end{table}

\begin{table}[H]
	\centering
	\scalebox{0.9}{
\begin{tabular}{ccccccc}
\hline
Method & Count & min & 25\% & Median & 75\% & max\\
\hline
Integration \eqref{truncated_integral} & 3344 (77.84\%) & 0 & $2.22\cdot 10^{-16}$ & $7.77\cdot 10^{-16}$ & $5.19\cdot 10^{-15}$ & $4.54\cdot 10^{-13}$\\
Series \eqref{general_expansion_xmu_small_bessel} & 9 (0.21\%) & $2.22\cdot 10^{-16}$ & $1.11\cdot 10^{-15}$ & $2.00\cdot 10^{-15}$ & $1.09\cdot 10^{-14}$ & $1.46\cdot 10^{-13}$\\
Series \eqref{general_xmu_small_hermite_series} & 516 (12.01\%) & 0 & $2.22\cdot 10^{-16}$ & $6.66\cdot 10^{-16}$ & $8.83\cdot 10^{-15}$ & $4.90\cdot 10^{-13}$\\
Series \eqref{general_beta_small_hermite_series} & 156 (3.63\%) & 0 & $4.44\cdot 10^{-16}$ & $2.33\cdot 10^{-15}$ & $1.87\cdot 10^{-14}$ & $4.71\cdot 10^{-13}$\\
Asymptotic \eqref{general_asymptotic_xmu} & 13 (0.30\%) &  0 & 0 & 0 & 0 & $2.11\cdot 10^{-14}$\\
Asymptotic \eqref{general_asymptotic_delta} & 258 (6.01\%) &  0 & 0 & 0 & 0 & $3.62\cdot 10^{-13}$\\
	\hline
	\end{tabular}}
	\caption{Precision metrics of the numerical methods used for computing in the small region. The errors are the absolute relative errors compared to the reference solutions obtained using mpmath. Percentiles: 25, 50 (median), 75.}
	\label{table_methods_case_general_large}
\end{table}

\begin{table}[H]
	\centering
	\scalebox{0.9}{
\begin{tabular}{ccccccc}
\hline
Method & Count & min & 25\% & Median & 75\% & max\\
\hline
Integration \eqref{truncated_integral} & 400 (56.82\%) & $2.22\cdot 10^{-15}$ & $2.75\cdot 10^{-14}$ & $4.76\cdot 10^{-14}$ & $7.70\cdot 10^{-14}$ & $-$ \\
Series \eqref{general_expansion_xmu_small_bessel} & 2 (0.29\%) & $9.32\cdot 10^{-13}$ & $1.67\cdot 10^{-11}$ & $3.24\cdot 10^{-11}$ & $4.81\cdot 10^{-11}$ & $6.81\cdot 10^{-11}$\\
Series \eqref{general_xmu_small_hermite_series} & 155 (22.02\%) & $9.99\cdot 10^{-16}$ & $1.11\cdot 10^{-12}$ & $3.75\cdot 10^{-10}$ & $-$ & $-$\\
Series \eqref{general_beta_small_hermite_series} & 13 (1.85\%) & $2.55\cdot 10^{-13}$ & $7.09\cdot 10^{-13}$ & $1.62\cdot 10^{-12}$ & $7.45\cdot 10^{-12}$ & $7.96\cdot 10^{-11}$\\
Asymptotic \eqref{general_asymptotic_xmu} & 14 (1.99\%) & $2.66\cdot 10^{-15}$ & $5.22\cdot 10^{-14}$ & $3.65\cdot 10^{-12}$ & $2.08\cdot 10^{-10}$ & $-$\\
Asymptotic \eqref{general_asymptotic_delta} & 120 (17.05\%) & $2.11\cdot 10^{-15}$ & $3.87\cdot 10^{-13}$ & $1.02\cdot 10^{-9}$ & $-$ & $-$\\
	\hline
	\end{tabular}}
	\caption{Precision metrics of the numerical methods used for computing in the small region. The errors are the absolute relative errors compared to the reference solutions obtained using mpmath. Percentiles: 25, 50 (median), 75.}
	\label{table_methods_case_general_large_hard}
\end{table}

\begin{table}[H]
	\centering
	\scalebox{0.9}{
\begin{tabular}{ccccc|cc}
\hline
Library & Region & Success & Median & Mean & Time (s) & Time ($\mu s$)\\
\hline
SciPy & Small & 3983 / 5000 (79.66\%) &  $1.78\cdot 10^{-15}$ & $1.89 \cdot 10^{-5}$ & 2.53 & 506\\
Paper & Small & 4980 / 5000 (99.60\%) & $3.99\cdot 10^{-15}$ & $1.64\cdot 10^{-14}$ & 0.12 & 27\\
	\hline\
SciPy & Large & 3139 / 5000 (62.78\%) & $0.16\cdot 10^{-1}$ & $*$ & 2.74 & 550\\
Paper & Large & 4964 / 5000 (99.28\%) & $8.07\cdot 10^{-14}$ & $*$ & 0.51 & 102\\
	\hline
	\end{tabular}}
	\caption{Summary of the accuracy and performance comparison for the general case only using numerical integration. Symbol $*$ indicates numerical issues were encountered.}
	\label{table_comparison_case_general_integration}
\end{table}

\section{The function $\Phi\left(\frac{a}{\sqrt{t}} + b\sqrt{t}\right)$}

The function $F(t; a, b) = \Phi\left(\frac{a}{\sqrt{t}} + b\sqrt{t}\right)$ appears in the integrand of the integral representation in \eqref{integral_phi}. Given its relevance throughout this work, we introduce here some results that shall be used subsequently. $F(t; a, b)$ has the following integral representation \cite[\S 7.7.6]{NIST:DLMF}
\begin{equation}\label{integral_erfc_ab}
F(t; a, b) = \frac{1}{2}\erfc\left(-\frac{\frac{a}{\sqrt{t}} + b\sqrt{t}}{\sqrt{2}}\right)  = \sqrt{\frac{t}{\pi}} e^{-\frac{a^2}{2t}} \int_{-b/\sqrt{2}}^{\infty} e^{-(tu^2 - \sqrt{2}au)} \mathop{du}
\end{equation}

\subsection{Expansions $t$}

\subsubsection{Expansion $t \to 0$}

Let us consider the case $a < 0$, since we can use the mirror property $\Phi(z) = 1 - \Phi(-z)$ otherwise. To obtain an expansion for $t \to 0$, we expand $e^{-tu^2}$ and interchange summation and integration obtaining
\begin{equation*}
F(t; a, b) = \sqrt{\frac{t}{\pi}} e^{-\frac{a^2}{2t}} \sum_{k=0}^{\infty} \frac{(-t)^k}{k!}\int_{-b/\sqrt{2}}^{\infty} e^{\sqrt{2}a u} u^{2k}\mathop{du}.
\end{equation*}
For $a < 0$ the integral can be expressed in closed form in terms of the incomplete gamma function, $\Gamma(a, x)$
\begin{equation*}
\int_{-b/\sqrt{2}}^{\infty} e^{\sqrt{2}a u} u^{2k}\mathop{du} = \frac{\Gamma(2k+1, -ab)}{(\sqrt{2}a)^{2k+1}},
\end{equation*}
and for the special case $b=0$, it reduces to
\begin{equation*}
\int_{0}^{\infty} e^{\sqrt{2}a u} u^{2k}\mathop{du} = \frac{\Gamma(2k+1)}{(\sqrt{2}a)^{2k+1}}.
\end{equation*}
Then, we obtain the series expansion valid for $t \to 0$, $a \to -\infty$ and fixed $b$
\begin{equation}\label{phi_expansion_incgamma_t_small}
F(t; a, b) = \sqrt{\frac{t}{\pi}} e^{-\frac{a^2}{2t}} \sum_{k=0}^{\infty} \frac{(-1)^{k+1} t^k}{k!}\frac{\Gamma(2k + 1, ab)}{(\sqrt{2}a)^{2k+1}}.
\end{equation}


Moreover, another expansion valid for large values of $a > 0$ and $b > 0$ can be obtained after expanding $F(t;a,b)$ at $t=0$. The first coefficients are
\begin{equation}
c_0 = \frac{1}{a}, \quad c_1 = \frac{ab + 1}{a^3}, \quad c_2 = \frac{a^2b² +3ab + 3}{a^5}, \quad c_3 = \frac{a^3b^3 + 6a^2b^3 + 15ab + 15}{a^7}
\end{equation}
and the expansion reads
\begin{equation}
F(t; a, b) = 1 + \frac{e^{-\frac{1}{2} \left(\frac{a}{\sqrt{t}} + b\sqrt{t} \right)^2}}{\sqrt{2\pi}}\sum_{k=0}^{\infty}(-1)^{k+1}c_k t^{k + \frac{1}{2}}.
\end{equation}
The coefficients are expressible in terms of Bessel polynomials $y_k(x)$ \cite[\S A001498]{OEIS}, and it follows that
\begin{equation}
F(t; a, b) = 1 + \frac{e^{-\frac{1}{2} \left(\frac{a}{\sqrt{t}} + b\sqrt{t} \right)^2}}{a\sqrt{2\pi}}\sum_{k=0}^{\infty}(-1)^{k+1} \left(\frac{b}{a}\right)^k y_k\left(\frac{1}{ab}\right) t^{k + \frac{1}{2}},
\end{equation}
where $y_k(x)$ has an explicit formula
\begin{equation}
y_k(x) = \sum_{m=0}^k \binom{k}{m} (k + 1)_m \left(\frac{x}{2}\right)^m.
\end{equation}

Using the connection of the Bessel polynomials with the modified Bessel function of the second kind $K_k(x)$ given by \cite[\S 33.1.3]{Temme2015}
\begin{equation}
y_k(x) = \sqrt{\frac{2}{\pi x}}e^{1/x} K_{k + \frac{1}{2}}\left(\frac{1}{x}\right),
\end{equation}
the resulting expansion is represented as a Bessel-type expansion
\begin{equation}\label{phi_expansion_besselk}
F(t; a, b) = 1 + \frac{e^{-\frac{a^2}{2t} - \frac{b^2}{2}t}}{\pi}\sqrt{\frac{b}{a}}\sum_{k=0}^{\infty} (-1)^{k+1} \left(\frac{b}{a}\right)^k K_{k + \frac{1}{2}}(ab)t^{k + \frac{1}{2}}.
\end{equation}
The expansion is convergent for $t < 1$. The convergence follows from the asymptotic estimate of $(b/a)^k K_k(ab) \sim (b/a)^k \sqrt{\frac{\pi}{2ab}}e^{-ab}$ as $|ab| \to \infty$. The expansion can be seen as an asymptotic expansion for large $a$, or as a uniform asymptotic expansion for $a \sim b$. The coefficients can be computed by using a recurrence relation for the modified Bessel function.

\subsubsection{Expansion $t \to \infty$}

Let us focus on the case $t \to \infty$. We can develop an asymptotic expansion after expanding the term $e^{\sqrt{2}au}$ in \eqref{integral_erfc_ab}, which yields
\begin{equation*}
F(t; a, b) = \sqrt{\frac{t}{\pi}} e^{-\frac{a^2}{2t}} \sum_{k=0}^{\infty}\frac{(\sqrt{2}a)^k}{k!}\int_{-b/\sqrt{2}}^{\infty} e^{-t u^2} u^k \mathop{du}.
\end{equation*}
Considering the case $b < 0$ (again, we can use the mirror property), the integral has a closed-form
\begin{equation*}
\int_{-b/\sqrt{2}}^{\infty} e^{-t u^2} u^k \mathop{du} = \frac{\Gamma\left(\frac{k+1}{2}, \frac{b^2}{2}t\right)}{2 t^{\frac{k+1}{2}}}.
\end{equation*}
Thus,
\begin{equation}\label{phi_expansion_incgamma}
F(t; a, b) = \sqrt{\frac{t}{\pi}} \frac{e^{-\frac{a^2}{2t}}}{2}  \sum_{k=0}^{\infty}\frac{(\sqrt{2}a)^k}{k!} \frac{\Gamma\left(\frac{k+1}{2}, \frac{b^2}{2}t\right)}{t^{\frac{k+1}{2}}}.
\end{equation}
The asymptotic behaviour of the terms in the series is
\begin{equation*}
\frac{\Gamma\left(\frac{k+1}{2}, \frac{b^2}{2}t\right)}{t^{\frac{k+1}{2}}} \sim \left(\frac{b^2}{2}\right)^{\frac{k+1}{2}} e^{-\frac{b^2}{2} t}, \quad t\to\infty.
\end{equation*}
In fact this series is convergent, as can be observed taking the asymptotic estimate of $\Gamma(k, x)$ as $k \to \infty$. A simpler convergent expansion can be obtained transforming the integral in \eqref{integral_erfc_ab}
\begin{equation*}
\sqrt{\frac{t}{\pi}} e^{-\frac{a^2}{2t}} \int_{-b/\sqrt{2}}^{\infty} e^{-(tu^2 - \sqrt{2}au)} \mathop{du} = \sqrt{\frac{t}{\pi}} e^{-\frac{a^2}{2t} -ab - \frac{b^2}{2}t}\int_0^{\infty}e^{\sqrt{2}(a+bt)u} e^{-tu^2}\mathop{dt},
\end{equation*}
and expanding $e^{\sqrt{2}(a+bt)u}$ obtaining
\begin{equation}
F(t; a, b) = \sqrt{\frac{t}{\pi}} \frac{e^{-\frac{a^2}{2t} -ab - \frac{b^2}{2}t}}{2} \sum_{k=0}^{\infty} \frac{(\sqrt{2}(a+bt))^k}{k!}\frac{\Gamma\left(\frac{k+1}{2}\right)}{t^{\frac{k+1}{2}}}.
\end{equation}

Similarly to the expansion at $t \to 0$, we can obtain an asymptotic expansion expanding $F(t; a, b)$ at $t \to \infty$. For $b > 0$, the first terms of the expansion are
\begin{equation}
c_0 = 1, \quad c_1 = 2 + 2ab + a^2 b^2, \quad c_2 = 24 + 24ab + 12a^2b^2 + 4a^3b^3 + a^4b^4,
\end{equation}
\begin{equation}
F(t; a, b) = 1 + \frac{e^{-ab - \frac{b^2}{2}t}}{\sqrt{2\pi}}\sum_{k=0}^{\infty}\frac{(-1)^{k+1}}{2^k k!}\frac{c_k}{b^{2k+1}}\left(\frac{1}{t}\right)^{k+\frac{1}{2}}.
\end{equation}
The coefficients $c_k$ are expressible in terms of the incomplete gamma function, since
\begin{equation*}
c_k = \sum_{j=0}^{2k}\frac{(2k)!}{j!}(ab)^j = e^{ab}\Gamma(2k+1, ab).
\end{equation*}
Rearranging terms, we get
\begin{equation}\label{phi_expansion_at_inf}
F(t; a, b) = 1 + \frac{e^{- \frac{b^2}{2}t}}{\sqrt{2\pi}}\sum_{k=0}^{\infty}\frac{(-1)^{k+1}}{2^k k!}\frac{\Gamma(2k+1, ab)}{b^{2k+1}}\left(\frac{1}{t}\right)^{k+\frac{1}{2}}.
\end{equation}

\subsubsection{Expansion $t \to u$}

Lastly, we study the expansion of $F(t;a,b)$ at $t=u$. This expansion shall be crucial when developing various uniform asymptotic expansions later on. The first coefficients of the Taylor series are
\begin{equation*}
c_0 = \Phi\left(\frac{a}{\sqrt{u}} + b\sqrt{u}\right)d_0,\quad c_1 = \phi\left(\frac{a}{\sqrt{u}} + b\sqrt{u}\right) d_1, \quad c_2 = -\phi\left(\frac{a}{\sqrt{u}} + b\sqrt{u}\right) d_2, \quad
c_3 = \phi\left(\frac{a}{\sqrt{u}} + b\sqrt{u}\right) d_3,
\end{equation*}
where
\begin{align*}
d_0 &= 1\\
d_1 &= \frac{-a + bu}{2u^{3/2}}\\
d_2 &=\frac{a^3 -3au -a^2 bu + bu^2 -ab^2 u^2 + b^3u^3}{8 u^{7/2}}\\
d_3 &= \frac{-a^5 +10a^3u + a^4bu -15au^2 - 6a^2bu^2 + 2a^3 b^2 u^2 +3bu^3 -6ab^2u^3 - 2a^2b^3 u^3 + 2b^3 u^4 -ab^4u^4 + b^5 u^5}{48 u^{11/2}},
\end{align*}
and $\phi(x) = \frac{e^{-x^2/2}}{\sqrt{2\pi}}$ is the probability density function of the standard normal distribution. Thus, we have
\begin{equation}\label{phi_expansion_at_u}
F(t; a, b) = \sum_{k=0}^{\infty} c_k (t-u)^k.
\end{equation}
Additional terms satisfy the following recurrence
\begin{equation}
c_{k+4} = \frac{f_0(k) c_k + f_1(k) c_{k+1} + f_2(k) c_{k+2} + f_3(k) c_{k+3}}{f_4(k)}, \quad k \ge 0
\end{equation}
where
\begin{align*}
f_0(k) &= -kb^3\\
f_1(k) &= -(1 + k) b (1 + 2k -ab + 3b^3u)\\
f_2(k) &= (2 + k)(5a + 2ka +a^2b - 8bu - 6kbu +2ab^2u -3b^3u^2)\\
f_3(k) &= -(3 + k)(a^3 - 11au -4kau - a^2bu + 13bu^2 + 6kbu^2 -ab^2u^2 + b^3 u^3)\\
f_4(k) &= 2 (3 + k) (4 + k) u^2 (-a + bu)
\end{align*}

\subsection{Expansion $a \to 0$}
We also consider expansions for small values of the parameters. If we expand $F(t; a, b)$ at $a = 0$, we obtain the series
\begin{equation}
F(t; a, b) = \Phi\left(b\sqrt{t}\right) + \frac{e^{-\frac{b^2 t}{2}}}{\sqrt{2\pi t}}\sum_{k=1}^{\infty} (-1)^{k+1} \frac{a^k P_{k-1}(t; b)}{k!}.
\end{equation}
The first coefficients $P_k(t; b)$ are
\begin{align*}
P_0(t; b) &= 1\\
P_1(t; b) &= b\\
P_2(t; b) &= \frac{b^2 t - 1}{t}\\
P_3(t; b) &= \frac{b^3 t - 3b}{t}\\
P_4(t; b) &= \frac{b^4 t^2 - 6b^2t + 3}{t^2}\\
P_5(t; b) &= \frac{b^5 t^2 - 10b^3 t + 15 b}{t^2}
\end{align*}
Looking at the first coefficients, it is not hard to observe that coefficients $P_k(t;b)$ are expressible in terms of probabilist Hermite polynomials. Thus, after using the connection formula between probabilist and classical Hermite polynomials, we obtain the following convergent series expansion
\begin{equation}\label{phi_expansion_a_small}
F(t; a, b) = \Phi\left(b\sqrt{t}\right) + \frac{e^{-\frac{b^2 t}{2}}}{\sqrt{2\pi t}} \sum_{k=0}^{\infty} \frac{(-1)^k a^{k+1}}{(k+1)!}\frac{1}{(2t)^{k/2}} H_k\left(k, b\sqrt{\frac{t}{2}}\right)
\end{equation}

\subsection{Expansion $b \to 0$}
The Hermite-type expansion for the case $b \to 0$ is obtained analogously after expanding $F(t; a, b)$ at $b=0$. Thus,
\begin{equation}\label{phi_expansion_b_small}
F(t; a, b) = \Phi\left(\frac{a}{\sqrt{t}}\right) + e^{-\frac{a^2 t}{2}}\sqrt{\frac{t}{2\pi}} \sum_{k=0}^{\infty}\frac{(-1)^k b^{k+1}}{(k + 1)!}\left(\frac{t}{2}\right)^{k/2} H_k\left(\frac{a}{\sqrt{2t}}\right).
\end{equation}

\section{The modified Bessel function of the second kind}\label{appendix_modified_bessel}
In this appendix, we state some properties used in this work. For small values of $x$ we have
\begin{equation}\label{besselk_x_to_0}
K_{\nu}(x) \sim \frac{2^{|\nu| - 1} \Gamma(|\nu|)}{x^{|\nu|}}, \quad x \to 0, \quad \nu \neq 0.
\end{equation}

The asymptotic behaviour with respect to the argument $x$
\begin{equation}\label{besselk_x_to_inf}
K_{\nu}(x) \sim \sqrt{\frac{\pi}{2x}} e^{-x}, \quad x \to \infty, \quad \nu \in \mathbb{R},
\end{equation}
and with respect to the order $\nu$
\begin{equation}\label{besselk_order_to_inf}
K_{\nu}(x) \sim \sqrt{\frac{\pi}{2\nu}}\left(\frac{ex}{2\nu}\right)^{-\nu}, \quad \nu \to \infty, \quad x \neq 0.
\end{equation}

For $\nu = n + 1/2$ with $n \in \mathbb{Z}$, the modified Bessel function can be stated explicitly
\begin{equation}\label{besselk_half}
K_{n + 1/2}(z) = \sqrt{\frac{\pi}{2z}} \sum_{j=0}^n \frac{(n + j)!}{j! (n-j)!} (2z)^{-j}e^{-z}, \quad z\in \mathbb{C}.
\end{equation}

%
%
%
%
%
%
%

\bibliographystyle{plain}
\bibliography{bib}

\end{document}